\newtheorem{theorem}{\bf Theorem}[section]
\newtheorem{lemma}[theorem]{\bf Lemma}
\newtheorem{prop}[theorem]{\bf Proposition}
\newtheorem{remark}[theorem]{\bf Remark}
\newenvironment{proof}{\noindent{\em Proof:}}{\quad \hfill$\Box$\vspace{2ex}}
\def\no{\noindent}
\numberwithin{equation}{section}
\newenvironment{sequation}{\begin{equation}\small}{\end{equation}}
\newenvironment{seqnarray}{\begin{eqnarray}\small}{\end{eqnarray}}
\newcommand{\z}{\left}
\newcommand{\y}{\right}
\begin{document}

\thispagestyle{empty}
\begin{center}
{\LARGE \bf
The space of totally real flat minimal surfaces in the Quaternionic projective space $\mathbb{H}P^3$
\no
}
\end{center}

\begin{center}
Chuzi Duan
\footnote{
C. Duan
\\Center for Applied Mathematics and KL-AAGDM, Tianjin University, Tianjin, 300072, China
\\E-mail: duanchuzi@tju.edu.cn
}
and
Ling He
\footnote{
L. He (Corresponding author)
\\Center for Applied Mathematics and KL-AAGDM, Tianjin University, Tianjin, 300072, China
\\E-mail: heling@tju.edu.cn
}
\end{center}

\begin{center}
\parbox{12cm}
{\footnotesize{\bf ABSTRACT.}  We prove that the moduli space of all noncongruent linearly full totally real flat minimal immersions from the complex plane $\mathbb{C}$ into $\mathbb{H}P^3$ that do not lie in $\mathbb{C}P^3$ has three components, each of which is a manifold of real dimension $6$. As an application, we give a description of the moduli space of all noncongruent linearly full totally real flat minimal tori in $\mathbb{H}P^3$ that do not lie in $\mathbb{C}P^3$.
}
\end{center}

\no
{\bf{Keywords and Phrases.}} Twistor lift, minimal surfaces, totally real, moduli space.\\

\no
{\bf{Mathematics Subject Classification (2020).}} 53C42; 53C26; 58D10.

\section{Introduction}

In a theoretical physics language, we study two-dimensional sigma models with values in symmetric spaces. Din-Zakrzewski gave all solutions in the $\mathbb{C}P^{N-1}$ model and found the complexity of solutions in Grassmannian sigma models (cf. \cite{DZ1980},\cite{DZ1981}).
In mathematics, we look for harmonic maps from Riemann surface into symmetric spaces.
Eells-Wood in \cite{EW1983} described all harmonic two-spheres and harmonic two-tori of nonzero degree in $\mathbb{C}P^n$.
Chern-Wolfson in \cite{Chern-Wolfson1983} gave a moving frames interpretation, then in \cite{Chern-Wolfson1987} studied harmonic two-spheres in complex Grassmannians and proposed harmonic sequence by ``crossing'' construction.
Burstall-Wood in \cite{BF-WJ} developed a technique of analyzing harmonic maps from a Riemann surface into a complex Grassmannian by using ``diagrams'' to improve harmonic sequence theory.
More generally, Uhlenbeck in \cite{Uhlenbeck} found that any harmonic two-spheres to the unitary group or a complex Grassmannian can be obtained from a constant map by adding unitons, and showed that the moduli space of solutions is an algebraic variety, but did not give parametric description of this moduli space.
Based on Uhlenbeck's work, Bahy-El-Dien and Wood in \cite{BA-WJ} constructed explicitly all harmonic two-spheres in the quaternionic projective space $\mathbb{H}P^n$ by using twistor theory.
Through understanding harmonic map theories and studying its geometry, we obtained a series of classification and construction results about minimal two-spheres of constant Gauss curvature in $\mathbb{H}P^n$ (cf. \cite{HeJiao2014},\cite{HeJiao2015},\cite{HeJiao2015-2},\cite{FeiHe2017},\cite{ChenJiao2017},\cite{FeiPengXu2020},\cite{JiaoCui2021},\cite{ZhangJiao2021},\cite{JiaoXuXin2022}).
These results are important for studying minimal surfaces in symmetric spaces.

In order to further study the moduli space of minimal surfaces, we need some non-linear techniques.
Based on Penrose's idea in his twistor theory, Atiyah-Hitchin-Singer in \cite{AHS1978} used Atiyah-Singer index theorem to compute the dimension of the moduli space of self-dual irreducible connections.
In addition, twistor theory can be applied to study the space of harmonic two-spheres in compact symmetric spaces.
It is well known that the complex projective space $\mathbb{C}P^{2n+1}$ is the twistor space of $\mathbb{H}P^n$.
Loo in \cite{Loo1989} used twistor theory to show that the space of all harmonic maps of $S^2$ to $S^4$ of degree $d$ is known to be a connected complex algebraic variety of pure dimension $2d+4$.
Later, Kobak-Loo in \cite{KobakLoo1998} proved that the moduli space of quaternionic superminimal two-spheres in $\mathbb{H}P^n$ of degree $d$ is a connected quasi-projective variety of dimension $2nd+2n+2$.
However, the twistor space of spheres $S^{2n}~(n>2)$ is not biholomorphic to the complex projective spaces.
Fern\'{a}ndez in \cite{Fernandez} overcome this difficulty and showed that the space of harmonic maps is locally isomorphic to the space of holomorphic maps to $\mathbb{C}P^{n(n+1)/2}$, consequently proved that the dimension of the space of harmonic two-spheres in $S^{2n}$ of degree $\mathrm{d}$ is $2\mathrm{d}+n^2$.
Recently, Chi-Xie-Xu in \cite{CXX2021} used the singular-value decomposition theory to study the moduli space of noncongruent constantly curved minimal two-spheres in the complex hyperquadric $\mathcal{Q}_{n-1}$ which are also minimal in $\mathbb{C}P^n$.

It is natural to consider conformal minimal surfaces of higher genus in $\mathbb{H}P^n$.
Chi in \cite{Chi2000} applied deformation theory to study the dimension of the moduli space of superminimal surfaces of a fixed degree and conformal structure in $S^4$.
Generally, the method of integrable systems has been developed to find harmonic tori.
Burstall in \cite{Burstall1995} used such methods to construct all harmonic tori in spheres and complex projective spaces and showed that they are covered by primitive harmonic map of finite type.
Udagawa in \cite{U} combined the criterion that harmonic two-torus in a complex Grassmannian is of finite type with harmonic sequence theory to give a classification of all harmonic tori in $\mathbb{H}P^2$ and $\mathbb{H}P^3$, but didn't give the parameterization of the moduli space.

From the viewpoint of twistor theory, the existence of twistor lift is very important.
Fortunately, given a totally real isometric minimal immersion from a simply connected domain into $\mathbb{H}P^n$, there exists a totally real isometric horizontal minimal lift into $\mathbb{C}P^{2n+1}$ (cf. \cite{HeWang2005},\cite{HeZhou}).
He-Wang in \cite{HeWang2005} proved that the Veronese sequences in $\mathbb{R}P^{2m}$ ($2\leq 2m\leq n$) are the only totally real minimal two-spheres with constant Gauss curvature in $\mathbb{H}P^n$.
Recently, He-Zhou in \cite{HeZhou} considered totally real flat minimal surfaces in $\mathbb{H}P^n$ and find that the linearly full totally real flat minimal surfaces of isotropy order $n$ in $\mathbb{H}P^n$ lie in $\mathbb{C}P^n$, up to symplectic congruence.
In contrast to the two-sphere case, we ask whether other flat minimal surfaces with smaller isotropy order always lie in $\mathbb{C}P^n$.
The answer is no.
We can get many examples that do not lie in $\mathbb{C}P^n$.
Here we hope to study the space of all noncongruent linearly full totally real flat minimal surfaces in $\mathbb{H}P^n$ that do not lie in $\mathbb{C}P^n$.
Of course it is well known that the space of all noncongruent linearly full totally real flat minimal immersions from the complex plane $\mathbb{C}$ into $\mathbb{C}P^n$ is of real dimension $2(n-2)$ (cf. \cite{Liao}).

In this paper, we describe the moduli space of all noncongruent linearly full totally real flat minimal immersions from $\mathbb{C}$ into $\mathbb{H}P^3$ that do not lie in $\mathbb{C}P^3$ as follows (see Theorem \ref{moduli-space-C}).
\begin{theorem}\label{thm1}
    Let $\mathcal{M}_3(\mathbb{C})$ denote the moduli space of all noncongruent linearly full totally real flat minimal immersions from $\mathbb{C}$ into $\mathbb{H}P^3$ that do not lie in $\mathbb{C}P^3$, then
    $\mathcal{M}_3(\mathbb{C})$ has three components, each of which is a manifold of real dimension $6$ and intersects with two real hypersurfaces at
    \begin{small}
    \begin{equation*}
        \Gamma_3(\mathbb{C})=\z\{~ (\theta,r,w)\in \mathbb{R}^2\times\mathbb{C} ~\z|~\frac{\pi}{3}<\theta<\frac{\pi}{2},~0<r<\frac{1}{4\sin^2{\theta}}\y. ~\y\}.
    \end{equation*}
    \end{small}
\end{theorem}

As a direct application of Theorem \ref{thm1}, we can characterize the moduli space of all noncongruent linearly full totally real flat minimal tori in $\mathbb{H}P^3$ that do not lie in $\mathbb{C}P^3$ as follows (see Theorem \ref{moduli-space-tori}).
\begin{theorem}\label{thm2}
    Let $\mathcal{M}_3(\mathbb{T})$ denote the moduli space of all noncongruent linearly full totally real flat minimal tori in $\mathbb{H}P^3$ that do not lie in $\mathbb{C}P^3$, then
    $\mathcal{M}_3(\mathbb{T})$ has three components, each of which is a manifold of real dimension $4$ and intersects with two real hypersurfaces at
    \begin{small}
    \begin{equation*}
        \Gamma_3(\mathbb{T})=\z\{~ (\theta,r)\in \mathbb{R}^2 ~\z|~\frac{\pi}{3}<\theta<\frac{\pi}{2},~0<r<\frac{1}{4\sin^2{\theta}}\y. ~\y\}.
    \end{equation*}
    \end{small}
\end{theorem}

\begin{remark}
In a sense, Theorem \ref{thm2} gives an elegant description of the moduli space of harmonic two-tori in $\mathbb{H}P^3$ in the case of totally real flat.
A natural problem is how to extend this result to general target $\mathbb{H}P^n$ and general conformal harmonic two-tori.
We are optimistic about the former by promoting current methods.
\end{remark}

The paper is organized as follows.
In Sec.\ref{sec2}, we give some preliminaries.
In Sec.\ref{sec3}, we characterize all linearly full totally real flat minimal surfaces (local) in $\mathbb{H}P^3$ that do not lie in $\mathbb{C}P^3$ by discussing different isotropy orders (see Proposition \ref{prop1}), and give a description of the moduli space of all noncongruent linearly full totally real flat minimal immersions from $\mathbb{C}$ into $\mathbb{H}P^3$ that do not lie in $\mathbb{C}P^3$ (see Theorem \ref{moduli-space-C}).
In Sec.\ref{sec4}, we give the torus criterion (see Theorem \ref{torus-criterion}) and characterization of all linearly full totally real flat minimal tori in $\mathbb{H}P^3$ that do not lie in $\mathbb{C}P^3$ (see Theorem \ref{moduli-space-tori}).
At last, we discuss the case of isotropy order $2$ by the harmonic sequence (see Theorem \ref{isotropy-order-2}).

{\bf{Acknowledgments}}~
This work is supported by National Key R\&D Program of China No. 2022YFA1006600 and NSF in China Nos. 12071352, 12071338.

\section{Preliminaries}
\label{sec2}

Let $\mathbb{C}^{2n+2}$ be a $(2n+2)$-dimensional complex linear space with standard Hermitian inner product $\langle,\rangle$ defined by $\langle z,w \rangle=\sum\limits_{i=1}^{2n+2} z_i\bar{w}_i$, where $z=(z_1,\cdots,z_{2n+2})^T,w=(w_1,\cdots,w_{2n+2})^T$, and ~$\bar{}$~ denotes complex conjugation.
Let $\mathbb{H}$ be the division ring of quaternions, i.e.
\begin{small}
    \[
        \mathbb{H}=\z\{ a+b\texttt{i}+c\texttt{j}+d\texttt{k} \mid a,b,c,d\in\mathbb{R},~\texttt{i}^2=\texttt{j}^2=\texttt{k}^2=\texttt{i}\texttt{j}\texttt{k}=-1 \y\}.
    \]
\end{small}
We find that $\mathbb{H}^{n+1}=\z\{ (q_1,\cdots,q_{n+1})^T \mid q_k\in\mathbb{H}, k=1,\cdots,n+1 \y\}$ is a quaternion linear space under the following operations:
\begin{small}
    \begin{seqnarray}
        (p_1,\cdots,p_{n+1})^T+(q_1,\cdots,q_{n+1})^T&=&(p_1+q_1,\cdots,p_{n+1}+q_{n+1})^T\nonumber,\\
        (q_1,\cdots,q_{n+1})^T\cdot q&=&(q_1q,\cdots,q_{n+1}q)^T\nonumber.
    \end{seqnarray}
\end{small}
Since $a+b\texttt{i}+c\texttt{j}+d\texttt{k}=(a+b\texttt{i})+(c+d\texttt{i})\texttt{j}$, then we have a natural identification of $\mathbb{H}$ with $\mathbb{C}^2$, and thus $\mathbb{H}^{n+1}$ with $\mathbb{C}^{2n+2}$.
Notice that $\texttt{j}(z_1+z_2\texttt{j})=-\bar{z}_2+\bar{z}_1\texttt{j}$, then left multiplication by $\texttt{j}$ induce a conjugate linear map on $\mathbb{C}^{2n+2}$, also denoted by $\texttt{j}$, i.e.
\begin{small}
    \[
        \texttt{j}(z_1,z_2,\cdots,z_{2n+1},z_{2n+2})^T
        =
        (-\bar{z}_2,\bar{z}_1,\cdots,-\bar{z}_{2n+2},\bar{z}_{2n+1})^T.
    \]
\end{small}
Let $\mathbb{H}P^n$ be the set of all 1-dimensional subspaces of $\mathbb{H}^{n+1}$ with Fubini-Study metric.
Let $G(2,2n+2)$ be the Grassmann manifold of all complex 2-dimensional subspaces of $\mathbb{C}^{2n+2}$, then we can regard $\mathbb{H}P^n$ as the totally geodesic submanifold of $G(2,2n+2)$ as follows:
\begin{small}
    \[
        \mathbb{H}P^n
        =
        \z\{ V\in G(2,2n+2) \mid \texttt{j}V=V \y\}.
    \]
\end{small}

If $\varphi$ is a harmonic map from Riemann surface $M$ into $\mathbb{H}P^n$, then we can get its harmonic sequence $\{ \underline{\varphi}_{j} \}_{j\in \mathbb{N}}$ in $G(2,2n+2)$ as follows (cf. \cite{BA-WJ} and \cite{BF-WJ}):
\begin{sequation}
    \cdots
    \stackrel{A''_{\varphi_{-1}}}{\longleftarrow} \underline{\varphi}_{-1}
    \stackrel{A''_{\varphi_0}}{\longleftarrow} \underline{\varphi}_0=\underline{\varphi}
    \stackrel{A'_{\varphi_0}}{\longrightarrow} \underline{\varphi}_{1}
    \stackrel{A'_{\varphi_1}}{\longrightarrow} \underline{\varphi}_{2}
    \stackrel{A'_{\varphi_2}}{\longrightarrow} \cdots
    \stackrel{A'_{\varphi_{m-1}}}{\longrightarrow} \underline{\varphi}_{m}
    \stackrel{A'_{\varphi_m}}{\longrightarrow} \underline{\varphi}_{m+1}
    \stackrel{A'_{\varphi_{m+1}}}{\longrightarrow} \cdots,
\label{eq:34}
\end{sequation}
where $\underline{\varphi}_{-k}={\texttt{j}}\underline{\varphi}_{k}$ are $2$-dimensional harmonic subbundles of the trivial bundle $M\times \mathbb{C}^{2n+2}$.
Here
\[
    A'_{\varphi_0}(v)=\pi_{\varphi_0^{\perp}(\partial_z v)},
    ~A''_{\varphi_0}(v)=\pi_{\varphi_0^{\perp}(\partial_{\bar{z}} v)}
\]
for $v\in C^{\infty}(\underline{\varphi}_0)$, where $\pi_{\varphi_0^{\perp}}$ and $C^{\infty}(\underline{\varphi}_0)$ denote the orthogonal projection and the vector space of smooth sections of $\underline{\varphi}_0$ respectively.
We say that the {\it isotropy order} of $\varphi$ is $\mathrm{r}$ if $\underline{\varphi}_{0} \perp \underline{\varphi}_{i}$ for $i=1,\cdots,\mathrm{r}$ and $\underline{\varphi}_{\mathrm{r}+1}$ is not perpendicular to $\underline{\varphi}_{0}$.
In particular, if $\mathrm{r}=\infty$, then $\varphi$ is said to be {\it strongly isotropic}.

Applying the globally defined, non-degenerated four-form on $\mathbb{H}P^n$, we can define the {\it quaternionic K\"ahler angle} $\alpha$ with respect to $\varphi$ (cf. \cite{HeZhou}).
It gives a measure of the failure of $\varphi$ to be a totally complex map or a totally real map.
If $\alpha$ is identically equal to $\frac{\pi}{2}$ on $M$, then $\varphi$ is said to be {\it totally real}.

\section{Totally real flat minimal surfaces}
\label{sec3}

Suppose that $(M,ds_M^2)$ is a simply connected domain in the complex plane $\mathbb{C}$ with the flat metric $ds_M^2=2dzd\bar{z}$.
We consider the linearly full totally real isometric minimal immersion from $M$ into $\mathbb{H}P^n$.
The complex projective space $\mathbb{C}P^{2n+1}$ is the twistor space of $\mathbb{H}P^n$.
The twistor map $t:\mathbb{C}P^{2n+1} \rightarrow \mathbb{H}P^n$, is given by
$
    t\left([z_1,z_2,\cdots,z_{2n+1},z_{2n+2}]\right)
    =[z_1+z_2 \texttt{j},\cdots,z_{2n+1}+z_{2n+2}\texttt{j}].
$
Then $t$ is a Riemann submersion and the horizontal distribution is given by
$
    \sum\limits_{i=1}^{n+1}\left(z_{2i-1}dz_{2i}-z_{2i}dz_{2i-1}\right)=0.
$
Fortunately, we have the following result (cf. Theorem 3.4 in \cite{HeZhou}).
\begin{lemma}\label{lem3-1}
    Let $\varphi:M\to \mathbb{H}P^n$ be a linearly full totally real isometric minimal immersion, then there exists a totally real isometric horizontal minimal lift $[s]:M \to \mathbb{C}P^{2n+1}$.
\end{lemma}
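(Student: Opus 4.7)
The plan is to construct the lift locally by solving a first-order equation on the twistor fiber, and then to extend globally using the simple connectivity of $M$.

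Work in a simply connected coordinate disk $U\subset M$. Since $\texttt{j}\,\underline{\varphi}=\underline{\varphi}$, choose a smooth nonvanishing local section $V:U\to\mathbb{C}^{2n+2}$ of $\underline{\varphi}$, so that $\{V,\texttt{j}V\}$ is a complex frame for $\underline{\varphi}$ over $U$. Every twistor lift of $\varphi|_U$ then takes the form $[s]$ with $s=fV+g\,\texttt{j}V$, and, modulo an overall scalar, is parameterized by the ratio $w=g/f\in\mathbb{C}P^{1}$. Recalling from the excerpt that the horizontal distribution is cut out by $\sum_{i}(z_{2i-1}dz_{2i}-z_{2i}dz_{2i-1})=0$, horizontality of $[s]$ translates into $\langle\partial_{z}s,\texttt{j}s\rangle=0$. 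Substituting the expression for $s$ expands this into a first-order complex PDE for $w$ whose coefficients are built from quantities like $\langle\partial_{z}V,V\rangle$ and $\langle\partial_{z}V,\texttt{j}V\rangle$; standard local existence theory for $\overline{\partial}$-type equations on a disk then produces a smooth local solution.

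The key use of the totally real hypothesis is to guarantee that the locally defined lifts fit together into a globally defined one. Recast invariantly, the pullback $\varphi^{*}\mathbb{C}P^{2n+1}\to M$ is a $\mathbb{C}P^{1}$-bundle and the horizontality requirement defines a rank-$2$ distribution on its total space transverse to the fibers, so a horizontal lift is exactly an integral surface of this distribution. The Frobenius integrability of the distribution is controlled by the curvature of the twistor connection pulled back by $\varphi$, and the totally real condition $\alpha\equiv\pi/2$ is precisely what forces the relevant pulled-back curvature component to vanish (intuitively, because the tangent plane of $\varphi$ avoids the quaternionic directions in $T\mathbb{H}P^{n}$). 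Combined with the simple connectivity of $M$, this produces a unique smooth horizontal lift $[s]:M\to\mathbb{C}P^{2n+1}$ once a starting point in the twistor fiber over some $\varphi(p_{0})$ has been fixed.

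The remaining properties of $[s]$ then follow essentially for free. Since $t$ is a Riemannian submersion and $[s]$ is horizontal, horizontal tangent vectors project isometrically, so $[s]$ is an isometric immersion. Since $\varphi$ is minimal and the fibers of $t$ are totally geodesic $\mathbb{C}P^{1}$'s in $\mathbb{C}P^{2n+1}$, the standard fact that horizontal lifts of minimal maps along Riemannian submersions with totally geodesic fibers are themselves minimal shows $[s]$ is minimal. Finally, the totally real condition on $\varphi$, expressed through the interaction of the Hermitian structure with $\texttt{j}$, transfers under the horizontal lift into the vanishing of the Kähler form of $\mathbb{C}P^{2n+1}$ on $d[s](TM)$, which is exactly the totally real condition for $[s]$ in the Kähler sense. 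The principal obstacle throughout is the globalization (Frobenius/monodromy) step, and it is resolved precisely by the combination of the totally real hypothesis and the simple connectivity of $M$.
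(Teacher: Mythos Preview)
The paper does not prove this lemma at all: it simply cites Theorem~3.4 of \cite{HeZhou}. So there is no argument in the paper to compare yours against, and what you have written is an independent attempt at the result.

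Your overall architecture --- reduce to finding a section of the pulled-back $\mathbb{C}P^{1}$-bundle that is parallel for the connection induced by the horizontal distribution, check that this connection is flat, then invoke simple connectivity --- is the right one, and the derivation of ``isometric'' and ``minimal'' from the Riemannian-submersion properties of $t$ is fine. But two points are genuinely incomplete.

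First, horizontality is not a single equation. You write that it ``translates into $\langle\partial_{z}s,\texttt{j}s\rangle=0$'' and then appeal to local existence for $\overline{\partial}$-type equations. In fact horizontality of $[s]$ requires \emph{both} $\langle\partial_{z}s,\texttt{j}s\rangle=0$ and $\langle\partial_{\bar{z}}s,\texttt{j}s\rangle=0$; writing $w=g/f$ in your frame $\{V,\texttt{j}V\}$ one obtains a pair of Riccati-type equations
\[
\partial_{z}w = P(z,\bar z,w),\qquad \partial_{\bar z}w = Q(z,\bar z,w),
\]
which is an overdetermined system, not a $\overline{\partial}$ problem. Local existence is therefore not automatic; it is exactly the Frobenius condition you later invoke. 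Your first paragraph should be folded into the second rather than presented as an independent local-existence step.

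Second, and more seriously, the sentence ``the totally real condition $\alpha\equiv\pi/2$ is precisely what forces the relevant pulled-back curvature component to vanish'' is the entire content of the lemma, and you assert it rather than prove it. What has to be checked is that the curvature of the natural connection on the twistor $\mathbb{C}P^{1}$-bundle over $\mathbb{H}P^{n}$ is, up to a constant, the $\mathfrak{sp}(1)$-valued $2$-form built from the local K\"ahler forms $\omega_{1},\omega_{2},\omega_{3}$ of the quaternionic structure, so that $\varphi^{*}\omega_{i}=0$ for all $i$ (the totally real hypothesis) kills the pulled-back curvature. Likewise, your claim that the totally real property ``transfers'' to $[s]$ needs the observation that on horizontal vectors the Fubini--Study K\"ahler form of $\mathbb{C}P^{2n+1}$ agrees with one of the $\omega_{i}$ pulled up from $\mathbb{H}P^{n}$. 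Neither computation is hard, but both must actually be carried out for the argument to be a proof rather than a sketch.
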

In Lemma \ref{lem3-1}, $[s]$ may not be linearly full in $\mathbb{C}P^{2n+1}$, then it lies in $\mathbb{C}P^{m} \subset \mathbb{C}P^{2n+1}$ for $n\leq m \leq 2n+1$, where $m\geq n$ by using that $\varphi$ is linearly full in $\mathbb{H}P^n$.
In terms of the complex projective space, we have the following result (cf. Theorem 3.1 in \cite{Liao}).
\begin{lemma}
    If $f:M \rightarrow \mathbb{C}P^n$ is a linearly full totally real isometric minimal immersion, then up to unitary equivalence, $f=\left[V_0^{(n)}\right]$, where
    \begin{sequation}
        V_0^{(n)}(z)=
        \begin{pmatrix}
            e^{a_0z-\overline{a}_0\overline{z}}\xi^0,
            &e^{a_1z-\overline{a}_1\overline{z}}\xi^1,
            &\cdots,
            &e^{a_nz-\overline{a}_n\overline{z}}\xi^n
        \end{pmatrix}^T,
        \label{eq3-13}
    \end{sequation}
    and $a_k=e^{\texttt{i}\theta_k},~\xi^k=\sqrt{r_k}~(r_k>0)$ for $k=0,1,\cdots,n$, satisfying $0=\theta_0<\theta_1<\cdots <\theta_n<2\pi,
    ~r_0+r_1+\cdots+r_n=1$.
    \label{lem3-2}
\end{lemma}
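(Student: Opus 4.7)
The plan is to apply the moving frame method, building a unitary frame adapted to the harmonic sequence of $f$ and then integrating the resulting Maurer--Cartan equations. Choose a local unit lift $V_0: M \to S^{2n+1}$ of $f$, and construct iteratively by Gram--Schmidt on $V_0, \partial_z V_0, \partial_z^2 V_0, \ldots$ a unitary frame $(V_0, V_1, \ldots, V_n)$; linear fullness of $f$ guarantees this is a basis of $\mathbb{C}^{n+1}$ pointwise. Collecting the $V_k$ into $S = (V_0, \ldots, V_n)$, the connection form $\Omega = S^{-1} dS$ is $\mathfrak{u}(n+1)$-valued, and the construction of the frame implies that its $\partial_z$-part is upper Hessenberg with subdiagonal entries $\lambda_k := \Omega_{k+1,k}(\partial_z)$.

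First I would show that $\Omega$ has constant coefficients. The isometric condition $f^* ds_{FS}^2 = 2\, dz\, d\bar z$ pins down $|\lambda_0| = 1$, and harmonicity of $f$ combined with the Gauss--Codazzi equations for the harmonic sequence yields a Toda-type recursion
\begin{equation*}
\partial_z \partial_{\bar z} \log |\lambda_k|^2 = |\lambda_{k+1}|^2 - 2|\lambda_k|^2 + |\lambda_{k-1}|^2.
\end{equation*}
Flatness of the induced metric forces the left-hand side to vanish, so the $|\lambda_k|^2$ satisfy a linear recursion with trivial boundary terms and are therefore constant. The totally real hypothesis, encoded as vanishing of a Hopf-type inner product between $V_0$ and its conjugate partner in the harmonic sequence, then forces the diagonal entries of $\Omega(\partial_z)$ to be constant as well, and eliminates the remaining gauge freedom so that $\Omega$ is a constant skew-Hermitian matrix.

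Next I would diagonalize and integrate. A unitary change of basis $U$ simultaneously diagonalizes the $dz$- and $d\bar z$-parts of $\Omega$, so that in the new basis $\{e_0, \ldots, e_n\}$ the ODE $dV_0 = V_0 \,\Omega$ decouples componentwise and integrates to $V_0(z) = \sum_k e^{a_k z - \bar a_k \bar z}\, \xi^k e_k$. Reimposing the isometry $f^* ds_{FS}^2 = 2\, dz\, d\bar z$ in this form forces $|a_k| = 1$ for every $k$, i.e.\ $a_k = e^{\texttt{i}\theta_k}$; the normalization $|V_0|^2 = 1$ gives $\sum r_k = 1$ with $r_k = |\xi^k|^2$; absorbing phases of $\xi^k$ into $e_k$ makes $\xi^k = \sqrt{r_k} > 0$; and reordering the basis together with a rotation $z \mapsto e^{-\texttt{i}\theta_0} z$ arranges $0 = \theta_0 < \theta_1 < \cdots < \theta_n < 2\pi$.

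The main obstacle will be the totally real step: extracting from the analytic totally-real condition the precise statement that $\Omega(\partial_z)$ is constant with simultaneously diagonalizable $dz$- and $d\bar z$-parts, so that $|a_k|=1$ falls out cleanly. This requires a careful choice of gauge and a symmetric treatment of the $\partial_z$ and $\partial_{\bar z}$ halves of the harmonic sequence (2.1); the Toda reduction alone only controls the moduli $|\lambda_k|$, and the totally real condition is what rigidifies the phases and diagonal gauge. Once $\Omega$ is shown to be constant and diagonalizable in a unitary basis, the remainder is linear algebra and normalization.
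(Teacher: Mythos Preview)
The paper does not prove this lemma; it is quoted verbatim as Theorem~3.1 of Jensen--Liao~\cite{Liao} and used as a black box. So there is no in-paper argument to compare against, and your proposal is supplying what the paper deliberately outsources.

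Your moving-frame outline is the standard route to this kind of result and can be made to work, but one step is misattributed. You write that ``flatness of the induced metric forces the left-hand side to vanish'' in the Toda recursion for every $k$. Flatness together with the isometry hypothesis only gives $|\lambda_0|\equiv 1$, hence $\partial_z\partial_{\bar z}\log|\lambda_0|^2=0$; it says nothing directly about $\partial_z\partial_{\bar z}\log|\lambda_k|^2$ for $k\ge 1$, and on a simply connected planar domain a harmonic function need not be constant anyway. What actually drives the cascade is the totally real hypothesis: it imposes a conjugation symmetry on the harmonic sequence (so that the backward and forward osculating spaces match up and $|\lambda_{-1}|=|\lambda_0|$), and feeding this into the $k=0$ Toda identity yields $|\lambda_1|$ constant; the recursion then propagates up. You do flag the totally real step as the ``main obstacle,'' but it has to be invoked already here, not only later for the diagonal entries and phases. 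With that reordering, the remainder of your plan --- diagonalizing the now-constant connection form in a unitary basis, integrating componentwise, and normalizing to arrange $\theta_0=0$ and increasing angles --- goes through as you describe.
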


Using Lemma \ref{lem3-2}, we know that $s$ is given by \eqref{eq3-13}, up to $U(2n+2)$. But the isometry group of $\mathbb{H}P^n$ is $Sp(n+1)$, which is a subgroup of $U(2n+2)$.
In order to give the explicit expression of $\varphi$, we need to characterize $s$, up to $Sp(n+1)$.
Now we discuss totally real flat minimal surfaces in $\mathbb{H}P^3$ by different isotropy orders $\mathrm{r}$.
For $\mathrm{r}=3$, we have the following result (cf. Theorem 4.2 in \cite{HeZhou}).
\begin{prop}
    Let $\varphi:M \rightarrow \mathbb{H}P^3$ be a linearly full totally real isometric minimal immersion of isotropy order $3$, then up to $Sp(4)$, $\varphi$ lies in $\mathbb{C}P^3~(\subset \mathbb{H}P^3)$ given by
    \begin{sequation}\label{sec4+00}
    \varphi=
    \begin{bmatrix}
        e^{a_0z-\overline{a}_0\overline{z}},
        &e^{a_1z-\overline{a}_1\overline{z}},
        &e^{a_2z-\overline{a}_2\overline{z}},
        &e^{a_{3}z-\overline{a}_{3}\overline{z}}
    \end{bmatrix}^T,
    \end{sequation}
    where $a_k=e^{\texttt{i}\frac{2k\pi}{4}}~(k=0,1,2,3)$ (the Clifford solution in $\mathbb{C}P^3$), or $a_k=e^{\texttt{i}\frac{k\pi}{4}}~(k=0,1,2,3)$.
    \label{thm3}
\end{prop}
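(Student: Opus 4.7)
The plan is first to reduce to a classification problem inside an embedded $\mathbb{C}P^3$ by using Lemma \ref{lem3-1} and the isotropy order $n$ classification of \cite{HeZhou}, then to parametrize the resulting map via the Liao normal form of Lemma \ref{lem3-2}, and finally to pin down the two configurations of the $a_k$ by solving the algebraic conditions encoding isotropy order exactly $3$.

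First I would apply Lemma \ref{lem3-1} to obtain a totally real isometric horizontal minimal lift $[s]:M\to\mathbb{C}P^7$. The result \cite{HeZhou} cited in the introduction then gives, for a linearly full totally real flat minimal surface of isotropy order $3$ in $\mathbb{H}P^3$, that $\varphi$ lies, up to $Sp(4)$, in an embedded $\mathbb{C}P^3\subset\mathbb{H}P^3$; concretely the lift $s$ takes the form $(w_0,0,w_1,0,w_2,0,w_3,0)^T$, in which case the horizontality condition $\sum_i(z_{2i-1}dz_{2i}-z_{2i}dz_{2i-1})=0$ is automatic. This identifies $\varphi$ with a linearly full totally real isometric minimal immersion $[w_0,w_1,w_2,w_3]^T$ into $\mathbb{C}P^3$.

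Next I would apply Lemma \ref{lem3-2} with $n=3$, putting $\varphi$ into the Liao normal form
\begin{equation*}
\varphi=\bigl[\sqrt{r_0}\,e^{a_0z-\overline{a}_0\overline{z}},\ \sqrt{r_1}\,e^{a_1z-\overline{a}_1\overline{z}},\ \sqrt{r_2}\,e^{a_2z-\overline{a}_2\overline{z}},\ \sqrt{r_3}\,e^{a_3z-\overline{a}_3\overline{z}}\bigr]^T,
\end{equation*}
with $a_k=e^{\texttt{i}\theta_k}$, $0=\theta_0<\theta_1<\theta_2<\theta_3<2\pi$, $r_k>0$ and $\sum r_k=1$, modulo the residual subgroup of $Sp(4)$ preserving this embedded $\mathbb{C}P^3$. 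The harmonic sequence $\{\underline{\varphi}_j\}$ in $G(2,8)$ can then be computed explicitly from $s$ and $\texttt{j}s$ by differentiating in $z$: each $\underline{\varphi}_j$ is spanned by fixed linear combinations of the standard basis vectors with coefficients that are monomials in $\sqrt{r_k}$ and $a_k$.

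Finally the isotropy order $3$ hypothesis translates into the orthogonality conditions $\underline{\varphi}_0\perp\underline{\varphi}_i$ for $i=1,2,3$ together with $\underline{\varphi}_0\not\perp\underline{\varphi}_4$. These amount to moment-type equations of the form $\sum_k r_k a_k^{2i}=0$, supplemented by the compatibility between $s$ and $\texttt{j}s$ encoded by the totally real condition in $\mathbb{H}P^3$. Solving this algebraic system with $r_k>0$ and the ordered angles $\theta_k\in(0,2\pi)$ yields exactly two solutions: the equally spaced configuration $\theta_k=k\pi/2$ (the Clifford torus in $\mathbb{C}P^3$), and the configuration $\theta_k=k\pi/4$. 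A diagonal element of the residual gauge group then absorbs the weights $\sqrt{r_k}$ to give the form displayed in the statement. The main obstacle is this last algebraic classification: ruling out spurious positive solutions of the moment system under the strict ordering constraint on the $\theta_k$, which requires careful case analysis informed by the totally real and horizontality conditions in $\mathbb{H}P^3$.
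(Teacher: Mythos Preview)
The paper does not actually supply a proof of this proposition: it is quoted verbatim as Theorem~4.2 of \cite{HeZhou}. So there is no in-paper argument to compare against; what you are proposing is a proof sketch for a result the authors simply cite.

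Your outline is in the right spirit (lift via Lemma~\ref{lem3-1}, use the isotropy-order-$n$ result of \cite{HeZhou} to land in $\mathbb{C}P^3$, then apply the Liao normal form of Lemma~\ref{lem3-2} and read off the constraints), but two points are not right as written. First, the isotropy conditions are not of the form $\sum_k r_k a_k^{2i}=0$: compare with the proof of Proposition~\ref{prop1}, where isotropy order $\ge 1$ and $\ge 2$ correspond to $\sum_j r_j a_j=0$ and $\sum_j r_j a_j^2=0$ respectively, i.e.\ powers $a_k^{i}$, not $a_k^{2i}$. Second, you cannot ``absorb the weights $\sqrt{r_k}$'' by a diagonal element of the residual gauge group: diagonal elements of $Sp(4)$ (indeed of $U(4)$) have unit-modulus entries and only rotate phases. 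The correct mechanism is that the isotropy-order-$3$ system forces all $r_k$ equal (to $\tfrac14$), after which the common factor drops projectively; this must be proved, not gauged away.

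Finally, note that for the second listed configuration $a_k=e^{\texttt{i}k\pi/4}$ the imaginary part of $\sum_k r_k a_k$ is strictly positive for any $r_k>0$, so the naive moment condition $\sum_k r_k a_k=0$ cannot hold. This shows the translation between the $\mathbb{H}P^3$-isotropy order (for the rank-$2$ bundle $\underline{\varphi}_0=\underline{f}\oplus \texttt{j}\underline{f}$) and the $\mathbb{C}P^3$ moment equations is more delicate than your sketch assumes; the correct relations and the full algebraic classification are what \cite{HeZhou} actually carries out, and your ``main obstacle'' is essentially the entire content of that theorem.
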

For $\mathrm{r}=1$ and $\mathrm{r}=2$, we give the characterization of minimal surfaces as follows.
\begin{prop}
    \label{prop1}
    Let $\varphi:M \rightarrow \mathbb{H}P^3$ be a linearly full totally real isometric minimal immersion that does not lie in $\mathbb{C}P^3$, then $\varphi$ only have three types, and the isotropy order of $\varphi$ can be $1$ or $2$.
    If the isotropy order is at least $1$, then up to $Sp(4)$, $\varphi$ of each type is uniquely determined by five parameters, including four real and one complex, which satisfy certain constraints;
    if the isotropy order is $2$, then up to $Sp(4)$, $\varphi$ of each type is uniquely determined by only three parameters $\theta,r,w$ in $\mathbb{R}^2 \times \mathbb{C}$, which satisfy
    \[
        \frac{\pi}{3}<\theta<\frac{\pi}{2},
        ~0<r<\frac{1}{4\sin^2\theta},
        ~w\in \mathbb{C}.
    \]
\end{prop}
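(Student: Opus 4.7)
The plan is to combine the twistor lift of Lemma \ref{lem3-1} with the explicit Liao normal form of Lemma \ref{lem3-2}, and then read off which normal forms actually arise under the smaller symplectic equivalence group $Sp(4)$ instead of $U(8)$.

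First I would lift $\varphi$ to a horizontal totally real isometric minimal immersion $[s]:M\to\mathbb{C}P^{m}\subset\mathbb{C}P^{7}$ with $3\le m\le 7$. Since $\varphi$ is linearly full in $\mathbb{H}P^3$ but does not lie in $\mathbb{C}P^3$, the minimal $m$ with which $[s]$ is linearly full must satisfy $m\ge 4$; otherwise $[s]\subset \mathbb{C}P^3$ would force $\varphi\subset \mathbb{C}P^3$. Lemma \ref{lem3-2} then gives, up to a $U(m+1)$ transformation, a presentation $s=V_0^{(m)}(z)$ with frequencies $a_k=e^{\texttt{i}\theta_k}$ and weights $\xi^k=\sqrt{r_k}$ satisfying $0=\theta_0<\theta_1<\cdots<\theta_m<2\pi$ and $\sum r_k=1$. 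Imposing the horizontality equation $\sum_{i=1}^{4}(z_{2i-1}dz_{2i}-z_{2i}dz_{2i-1})=0$ translates into a system of algebraic equations on $(\theta_k,r_k)$ that couples the frequencies in pairs, together with the obvious symmetry $a_{k'}=-a_k$ on paired indices; this is what trims the a priori $U(8)$ freedom down to the $Sp(4)$ equivalence relevant for $\mathbb{H}P^3$.

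Next I would run a case analysis on $m\in\{4,5,6,7\}$ coupled with the isotropy order $\mathrm{r}$. Using the harmonic sequence \eqref{eq:34}, the condition $\underline{\varphi}_0\perp\underline{\varphi}_i$ for $i=1,\dots,\mathrm{r}$ becomes a family of orthogonality relations between successive $A'$-derivatives of $s$ and $\texttt{j}s$; for flat totally real lifts these reduce to vanishing of certain exponential sums in $\{a_k-a_l\}$, hence to linear equations on the $\theta_k$ (whose only solutions on the prescribed arc are discrete) combined with quadratic equations on the $r_k$. Proposition \ref{thm3} already rules out $\mathrm{r}=3$ in the non-$\mathbb{C}P^3$ case, so only $\mathrm{r}=1$ or $\mathrm{r}=2$ can occur. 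Matching the solutions across the possible $m$'s will produce exactly three families (three "types"), distinguished by which of the pairings among the $a_k$ are forced and by the resulting rank of the $Sp(4)$-moduli; I expect each family to be parametrised by four real parameters (two angles $\theta_1,\theta_2$ and two radii $r_1,r_2$) together with one residual complex parameter $w$ coming from the freedom in the horizontal lift that is not fixed by $Sp(4)$.

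Finally, specialising to isotropy order $\mathrm{r}=2$ I would impose the extra orthogonality $\langle\underline{\varphi}_0,\underline{\varphi}_2\rangle=0$, which forces $\theta_2=2\theta_1$ (or the symmetric counterpart) and a corresponding relation between $r_1$ and $r_2$, collapsing the four real parameters to a single pair $(\theta,r)$. The domain $\pi/3<\theta<\pi/2$ and $0<r<1/(4\sin^2\theta)$ should emerge from: (i) the constraint $0<\theta_0<\theta_1<\theta_2<2\pi$ together with horizontality, which bounds $\theta$ below by $\pi/3$ and above by $\pi/2$; and (ii) the positivity and normalisation of the $r_k$'s (together with the non-degeneracy of $V_0^{(m)}$), which bounds $r$ above by $1/(4\sin^2\theta)$. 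The residual complex parameter $w$ persists because it only affects the lift, not the $\mathbb{H}P^3$ congruence class, giving the stated parameter region in $\mathbb{R}^2\times\mathbb{C}$. The main obstacle will be the symplectic reduction step: showing that after quotienting the Liao normal forms by the $U(8)/Sp(4)$ ambiguity one obtains exactly three disjoint types and no spurious identifications, which requires a careful check that the isotropy computation and the horizontality pairing are compatible with every residual symmetry of $V_0^{(m)}$.
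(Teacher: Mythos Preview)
Your overall strategy (lift via Lemma \ref{lem3-1}, put into Liao normal form, case-split on $m$) matches the paper's, but the central algebraic device is missing and two of your predicted relations are wrong. The paper does not work with the horizontality $1$-form you wrote; instead it writes $s=UV_0^{(m)}$ with $U\in U(8)$, forms the anti-symmetric unitary matrix $W=U^TJU$, and observes that the $Sp(4)$-orbit of $s$ is determined entirely by $W$ (since $Sp(4)\cdot G_W=G_W$ for $G_W=\{U:U^TJU=W\}$). Horizontality is then encoded by $\langle\partial_z^ks,\texttt{j}s\rangle=0$ for $k=1,2$, which becomes the linear system \eqref{w-eq} on the entries $w_{ij}$. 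This is what forces $w_{ij}=0$ unless $a_i+a_j=0$, singles out the case $m=5$ with three antipodal pairs $a_{k+3}=-a_k$, and produces the three types according to which pair carries the residual $(a,b)$ block of $W$. Without $W$ you have no concrete handle on the $Sp(4)$-quotient, and the ``careful check'' you flag at the end cannot actually be performed.

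Two specific predictions are incorrect. First, at isotropy order $2$ the extra condition is $\sum_j a_j^2 r_j=0$; since the $a_j$ are already paired, its imaginary part gives $\sin(\theta_1+\theta_2)=0$, i.e.\ $\theta_1+\theta_2=\pi$, not $\theta_2=2\theta_1$. The lower bound $\theta>\pi/3$ does not come from the angle ordering but from the inequality $|w_{03}|^2<1$ (equivalently $\square^2 r_2 r_5<s_1^2 r_0 r_3$), which unwinds to $4\sin^2\theta>3$. Second, you assert that the complex parameter $w$ ``only affects the lift, not the $\mathbb{H}P^3$ congruence class''; this is backwards. The parameter $w=a/b$ sits in the fourth component of $\varphi$ itself (see \eqref{varphi-general-form}), and distinct values of $w$ give $Sp(4)$-noncongruent surfaces --- otherwise the moduli space in Theorem \ref{moduli-space-C} would not have real dimension $6$.
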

\begin{proof}
We will prove this proposition in three steps.
Firstly, we will find all $\varphi$ that lie in $\mathbb{H}P^3$ but do not lie in $\mathbb{C}P^3$ based on whether $a_j$ are paired; next, we will study $\varphi$ with isotropy order at least $1$; finally, we will study $\varphi$ with isotropy order $2$.

{\it Step 1}. Let $\varphi:M \to \mathbb{H}P^3$ be a linearly full totally real isometric minimal immersion.
Applying Lemma \ref{lem3-1}, we know that there exists a totally real isometric horizontal minimal lift $\underline{f}=[s]:M \to \mathbb{C}P^7$ such that $\underline{\varphi}=\underline{f} \oplus \texttt{j}\underline{f}$.
Then it follows from Lemma \ref{lem3-2} that there exists a unitary matrix $U \in U(8)$ such that $s=UV_0^{(m)}$, where $V_0^{(m)}$ is a vector in $\mathbb{C}^8$ as follows,
\[
    V_0^{(m)}(z)=
    \begin{pmatrix}
         e^{a_0z-\overline{a}_0\overline{z}}\xi^0,
        &e^{a_1z-\overline{a}_1\overline{z}}\xi^1,
        &\cdots,
        &e^{a_mz-\overline{a}_m\overline{z}}\xi^m,
        &0,
        &\cdots,
        &0
    \end{pmatrix}^T.
\]
Since $\varphi$ is linearly full in $\mathbb{H}P^3$, we know that $3 \leq m \leq 7$.
Let $W=U^TJU=(w_{ij})$, where $J$ is a block diagonal matrix of order $8$ as follows,
\[
    J=diag
    \z\{
        \begin{pmatrix}
            0 & -1\\
            1 & 0
        \end{pmatrix},
        \begin{pmatrix}
            0 & -1\\
            1 & 0
        \end{pmatrix},
        \begin{pmatrix}
            0 & -1\\
            1 & 0
        \end{pmatrix},
        \begin{pmatrix}
            0 & -1\\
            1 & 0
        \end{pmatrix}
    \y\},
\]
then $W$ is an anti-symmetric unitary matrix.
By Proposition 3.5 in \cite{HeZhou}, we have $\z\langle \partial_z^k s,~\texttt{j}s \y\rangle=\z\langle \partial_{\bar{z}}^k s,~\texttt{j}s \y\rangle=0~(k=1,2)$, then for $k=1,2$,
\begin{small}
    \begin{eqnarray}
        \sum_{i,j=0}^m \left(w_{ij}\xi^i\xi^ja_j^k e^{(a_i+a_j)z-(\overline{a}_i+\overline{a}_j)\overline{z}}\right)
        =
        \sum_{i,j=0}^m \left(w_{ij}\xi^i\xi^j(-\bar{a}_j)^k e^{(a_i+a_j)z-(\overline{a}_i+\overline{a}_j)\overline{z}}\right)
        =0.
    \label{w-eq}
    \end{eqnarray}
\end{small}
For a given $W$, we can define a set
\[
    G_W=\z\{ U\in U(8) \mid U^TJU=W \y\},
\]
then it is easy to check that $Sp(4) \cdot G_W=G_W$, which means that we will obtain $\varphi$ up to $Sp(4)$ as long as we determine $W$ and find some $U\in G_W$.

Next, we discuss $W$ in three cases based on whether the $a_j$ are paired.

\textbf{Case I:} $\forall ~i,j=0,1,\cdots,m,~a_i+a_j\neq 0$.
In this case, we have $w_{ij}=0$ by \eqref{w-eq}, which shows that
\[
    W=
    \begin{pmatrix}
        0_{(m+1)\times(m+1)} & *\\
        * & *\\
    \end{pmatrix}.
\]
Since $W$ is an anti-symmetric unitary matrix, then $m=3$.
We can choose $U\in G_W$ as follows:
\begin{small}
    \[
        U=
        \begin{pmatrix}
            1&0&0&0&0      &0      &0      &0\\
            0&0&0&0&-w_{04}&-w_{05}&-w_{06}&-w_{07}\\
            0&1&0&0&0      &0      &0      &0\\
            0&0&0&0&-w_{14}&-w_{15}&-w_{16}&-w_{17}\\
            0&0&1&0&0      &0      &0      &0\\
            0&0&0&0&-w_{24}&-w_{25}&-w_{26}&-w_{27}\\
            0&0&0&1&0      &0      &0      &0\\
            0&0&0&0&-w_{34}&-w_{35}&-w_{36}&-w_{37}\\
        \end{pmatrix}.
    \]
\end{small}
Then we get the horizontal lift of $\varphi$
\begin{small}
    \[
        s(z)
        =
        UV_0^{(3)}(z)
        =
        \begin{pmatrix}
            e^{a_0z-\bar{a}_0\bar{z}}\xi^0,
            &0,
            &e^{a_1z-\bar{a}_1\bar{z}}\xi^0,
            &0,
            &e^{a_2z-\bar{a}_2\bar{z}}\xi^0,
            &0,
            &e^{a_3z-\bar{a}_3\bar{z}}\xi^0,
            &0
        \end{pmatrix}^T,
    \]
\end{small}
which implies
\[
    \varphi
    =
    \begin{bmatrix}
        e^{a_0z-\bar{a}_0\bar{z}}\xi^0,
        &e^{a_1z-\bar{a}_1\bar{z}}\xi^0,
        &e^{a_2z-\bar{a}_2\bar{z}}\xi^0,
        &e^{a_3z-\bar{a}_3\bar{z}}\xi^0
    \end{bmatrix}^T.
\]
This lies in $\mathbb{C}P^3$.

\textbf{Case II:} $\forall ~i,~\exists ~j~s.t.~a_i+a_j=0$.
In this case, $m$ must be odd.
Since $3 \leq m \leq 7$, we only have three possibilities: $m=3,~m=5,~m=7$.

If $m=3$, then the same argument as the first case shows that
\begin{small}
    \[
        W=
        \begin{pmatrix}
            0_{4\times 4}&*\\
            *            &*
        \end{pmatrix},
    \]
\end{small}
which implies that $\varphi$ lies in $\mathbb{C}P^3$.

If $m=5$, then $a_0+a_3=a_1+a_4=a_2+a_5=0$.
From \eqref{w-eq} it follows that $w_{ij}=0$ for $0\leq i,j \leq 5$ and $|i-j|\neq 3$.
Moreover we have
\begin{small}
    \[
    \z\{
        \begin{aligned}
            w_{03}\xi^0\xi^3a_0+w_{14}\xi^1\xi^4a_1+w_{25}\xi^2\xi^5a_2=0,\\
            w_{03}\xi^0\xi^3\bar{a}_0+w_{14}\xi^1\xi^4\bar{a}_1+w_{25}\xi^2\xi^5\bar{a}_2=0.
        \end{aligned}
    \y.
    \]
\end{small}
By solving this linear system, we obtain
\begin{small}
    \[
    \z\{
        \begin{aligned}
            w_{14}&=w_{03}\cdot\frac{\xi^0\xi^3}{\xi^1\xi^4}
                          \cdot\frac{\sin\theta_2}{\sin(\theta_1-\theta_2)},\\
            w_{25}&=w_{03}\cdot\frac{\xi^0\xi^3}{\xi^2\xi^5}
                          \cdot\frac{\sin\theta_1}{\sin(\theta_2-\theta_1)}.
        \end{aligned}
    \y.
    \]
\end{small}
So
\begin{small}
    \[
        W=
        \begin{pmatrix}
            0&0&0&w_{03}&0&0&w_{06}&w_{07}\\
            0&0&0&0&w_{14}&0&w_{16}&w_{17}\\
            0&0&0&0&0&w_{25}&w_{26}&w_{27}\\
            -w_{03}&0&0&0&0&0&w_{36}&w_{37}\\
            0&-w_{14}&0&0&0&0&w_{46}&w_{47}\\
            0&0&-w_{25}&0&0&0&w_{56}&w_{57}\\
            -w_{06}&-w_{16}&-w_{26}&-w_{36}&-w_{46}&-w_{56}&w_{66}&w_{67}\\
            -w_{07}&-w_{17}&-w_{27}&-w_{37}&-w_{47}&-w_{57}&-w_{67}&w_{77}
        \end{pmatrix}.
    \]
\end{small}
Since $W$ is an anti-symmetric unitary matrix, we only have four possibilities:
\begin{enumerate}
    \item
        $
            \begin{pmatrix}
                w_{06}&w_{16}&w_{26}&w_{36}&w_{46}&w_{56}\\
                w_{07}&w_{17}&w_{27}&w_{37}&w_{47}&w_{57}
            \end{pmatrix}^T
            =
            \begin{pmatrix}
                A&0&0&-\bar{B}&0&0\\
                B&0&0&\bar{A} &0&0
            \end{pmatrix}^T,
        $
    \item
        $
            \begin{pmatrix}
                w_{06}&w_{16}&w_{26}&w_{36}&w_{46}&w_{56}\\
                w_{07}&w_{17}&w_{27}&w_{37}&w_{47}&w_{57}
            \end{pmatrix}^T
            =
            \begin{pmatrix}
                0&A&0&0&-\bar{B}&0\\
                0&B&0&0&\bar{A} &0
            \end{pmatrix}^T,
        $
    \item
        $
            \begin{pmatrix}
                w_{06}&w_{16}&w_{26}&w_{36}&w_{46}&w_{56}\\
                w_{07}&w_{17}&w_{27}&w_{37}&w_{47}&w_{57}
            \end{pmatrix}^T
            =
            \begin{pmatrix}
                0&0&A&0&0&-\bar{B}\\
                0&0&B&0&0&\bar{A}
            \end{pmatrix}^T,
        $
    \item
        $
            \begin{pmatrix}
                w_{06}&w_{16}&w_{26}&w_{36}&w_{46}&w_{56}\\
                w_{07}&w_{17}&w_{27}&w_{37}&w_{47}&w_{57}
            \end{pmatrix}^T
            =
            \begin{pmatrix}
                0&0&0&0&0&0\\
                0&0&0&0&0&0
            \end{pmatrix}^T,
        $
\end{enumerate}
where $A,~B\in\mathbb{C}$ such that $W\in U(8)$.
In the last situation, it is easy to find that $\varphi$ is not linearly full in $\mathbb{H}P^3$.
In the first situation, we can choose $U$ as follows:
\begin{small}
    \[
        U=
        \begin{pmatrix}
            1&0&0&0&0&0&0&0\\
            0&0&0&-w_{03}&0&0&-A&-B\\
            0&1&0&0&0&0&0&0\\
            0&0&0&0&-w_{03}\cdot\frac{\xi^0\xi^3}{\xi^1\xi^4}\cdot\frac{\sin\theta_2}{\sin(\theta_1-\theta_2)}&0&0&0\\
            0&0&1&0&0&0&0&0\\
            0&0&0&0&0&-w_{03}\cdot\frac{\xi^0\xi^3}{\xi^2\xi^5}\cdot\frac{\sin\theta_1}{\sin(\theta_2-\theta_1)}&0&0\\
            0&0&0&a&0&0&*&*\\
            0&0&0&b&0&0&*&*
        \end{pmatrix},
    \]
\end{small}
where $|a|^2+|b|^2+|w_{03}|^2=|w_{14}|^2=|w_{25}|^2=1$.
Then we obtain
\begin{small}
    \begin{sequation}\label{first}
        \varphi=
        \begin{bmatrix}
            (\xi^0-w_{03}\cdot \xi^3\cdot \texttt{j})\cdot e^{z-\bar{z}}\\
            (\xi^1-w_{03}\cdot\frac{\xi^0\xi^3}{\xi^1}\cdot\frac{\sin\theta_2}{\sin(\theta_1-\theta_2)}\cdot \texttt{j})\cdot e^{a_1z-\bar{a}_1\bar{z}}\\
            (\xi^2-w_{03}\cdot\frac{\xi^0\xi^3}{\xi^2}\cdot\frac{\sin\theta_1}{\sin(\theta_2-\theta_1)}\cdot \texttt{j})\cdot e^{a_2z-\bar{a}_2\bar{z}}\\
            \xi^3 e^{\bar{a}_0\bar{z}-a_0z}(a+b\texttt{j})
        \end{bmatrix}.
    \end{sequation}
\end{small}
By applying similar arguments to the second and third situations, we obtain
\begin{small}
    \begin{sequation}\label{second}
        \varphi=
        \begin{bmatrix}
            (\xi^0-w_{03}\cdot \xi^3\cdot \texttt{j})\cdot e^{z-\bar{z}}\\
            (\xi^1-w_{03}\cdot\frac{\xi^0\xi^3}{\xi^1}\cdot\frac{\sin\theta_2}{\sin(\theta_1-\theta_2)}\cdot \texttt{j})\cdot e^{a_1z-\bar{a}_1\bar{z}}\\
            (\xi^2-w_{03}\cdot\frac{\xi^0\xi^3}{\xi^2}\cdot\frac{\sin\theta_1}{\sin(\theta_2-\theta_1)}\cdot \texttt{j})\cdot e^{a_2z-\bar{a}_2\bar{z}}\\
            \xi^4 e^{\bar{a}_1\bar{z}-a_1z}(a+b\texttt{j})
        \end{bmatrix},
    \end{sequation}
\end{small}
where $|a|^2+|b|^2+|w_{14}|^2=|w_{03}|^2=|w_{25}|^2=1$, and
\begin{small}
    \begin{sequation}\label{third}
        \varphi=
        \begin{bmatrix}
            (\xi^0-w_{03}\cdot \xi^3\cdot \texttt{j})\cdot e^{z-\bar{z}}\\
            (\xi^1-w_{03}\cdot\frac{\xi^0\xi^3}{\xi^1}\cdot\frac{\sin\theta_2}{\sin(\theta_1-\theta_2)}\cdot \texttt{j})\cdot e^{a_1z-\bar{a}_1\bar{z}}\\
            (\xi^2-w_{03}\cdot\frac{\xi^0\xi^3}{\xi^2}\cdot\frac{\sin\theta_1}{\sin(\theta_2-\theta_1)}\cdot \texttt{j})\cdot e^{a_2z-\bar{a}_2\bar{z}}\\
            \xi^5 e^{\bar{a}_2\bar{z}-a_2z}(a+b\texttt{j})
        \end{bmatrix},
    \end{sequation}
\end{small}
where $|a|^2+|b|^2+|w_{25}|^2=|w_{03}|^2=|w_{14}|^2=1$ respectively.
When $b \neq 0$, none of these three types of mappings lie in $\mathbb{C}P^3$ up to $Sp(4)$.
They all truly lie in $\mathbb{H}P^3$.

If $m=7$, then $a_0+a_4=a_1+a_5=a_2+a_6=a_3+a_7=0$.
From \eqref{w-eq} it follows that $w_{ij}=0$ for $0\leq i,j \leq 7$ and $|i-j|\neq 4$.
Moreover we have
\begin{small}
    \[
    \z\{
        \begin{aligned}
            w_{04}\xi^0\xi^4a_0+w_{15}\xi^1\xi^5a_1+w_{26}\xi^2\xi^6a_2+w_{37}\xi^3\xi^7a_3=0,\\
            w_{04}\xi^0\xi^4\bar{a}_0+w_{15}\xi^1\xi^5\bar{a}_1+w_{26}\xi^2\xi^6\bar{a}_2+w_{37}\xi^3\xi^7\bar{a}_3=0.
        \end{aligned}
    \y.
    \]
\end{small}
By solving this linear system, we obtain
\begin{small}
    \[
    \z\{
        \begin{aligned}
            w_{26}&=\frac{w_{04}\xi^0\xi^4\sin\theta_3+w_{15}\xi^1\xi^5\sin(\theta_3-\theta_1)}
                        {\xi^2\xi^6\sin(\theta_2-\theta_3)},\\
            w_{37}&=\frac{w_{04}\xi^0\xi^4\sin\theta_2+w_{15}\xi^1\xi^5\sin(\theta_2-\theta_1)}
                        {\xi^3\xi^7\sin(\theta_3-\theta_2)}.
        \end{aligned}
    \y.
    \]
\end{small}
So
\begin{small}
    \[
        W=
        \begin{pmatrix}
            0      &0      &0      &0      &w_{04}&     0&0     &0     \\
            0      &0      &0      &0      &0     &w_{15}&0     &0     \\
            0      &0      &0      &0      &0     &0     &w_{26}&0     \\
            0      &0      &0      &0      &0     &0     &0     &w_{37}\\
            -w_{04}&0      &0      &0      &0     &0     &0     &0     \\
            0      &-w_{15}&0      &0      &0     &0     &0     &0     \\
            0      &0      &-w_{26}&0      &0     &0     &0     &0     \\
            0      &0      &0      &-w_{37}&0     &0     &0     &0
        \end{pmatrix},
    \]
\end{small}
where $|w_{26}|^2=|w_{37}|^2=1$.
One of the elements $U$ in $G_W$ is given as follows:
\begin{small}
    \[
        U=
        \begin{pmatrix}
            1&0&0&0&0      &0      &0      &0      \\
            0&0&0&0&-w_{04}&0      &0      &0      \\
            0&1&0&0&0      &0      &0      &0      \\
            0&0&0&0&0      &-w_{15}&0      &0      \\
            0&0&1&0&0      &0      &0      &0      \\
            0&0&0&0&0      &0      &-w_{26}&0      \\
            0&0&0&1&0      &0      &0      &0      \\
            0&0&0&0&0      &0      &0      &-w_{37}
        \end{pmatrix}.
    \]
\end{small}
Then we can easily compute the horizontal lift $s$ and finally get $\varphi$ as follows:
\begin{small}
    \[
        \varphi=
        \begin{bmatrix}
            (\xi^0-w_{04}\xi^4\cdot \texttt{j}) e^{z-\bar{z}}~~~~~\\
            (\xi^1-w_{15}\xi^5\cdot \texttt{j}) e^{a_1z-\bar{a}_1\bar{z}}\\
            (\xi^2-\frac{w_{04}\xi^0\xi^4\sin\theta_3+w_{15}\xi^1\xi^5\sin(\theta_3-\theta_1)}{\xi^2\sin(\theta_2-\theta_3)}\cdot \texttt{j})         e^{a_2z-\bar{a}_2\bar{z}}\\
            (\xi^3-\frac{w_{04}\xi^0\xi^4\sin\theta_2+w_{15}\xi^1\xi^5\sin(\theta_2-\theta_1)}{\xi^3\sin(\theta_3-\theta_2)}\cdot  \texttt{j})        e^{a_3z-\bar{a}_3\bar{z}}
        \end{bmatrix}.
    \]
\end{small}
This lies in $\mathbb{C}P^3$ up to $Sp(4)$.

\textbf{Case III:} $\exists ~i~s.t.~\forall ~j,~a_i+a_j \neq 0$ and $\exists~k,l~s.t.~a_k+a_l=0$.
In this case, we have the following possibilities:
\begin{small}
    \[
    \z\{
        \begin{aligned}
            m&=3\Rightarrow 2~unpaired, 2~paired; \\
            m&=4\Rightarrow 3~unpaired, 2~paired; 1~unpaired, 4~paired;\\
            m&=5\Rightarrow 4~unpaired, 2~paired; 2~unpaired, 4~paired;\\
            m&=6\Rightarrow 5~unpaired, 2~paired; 3~unpaired, 4~paired; 1~unpaired, 6~paired;\\
            m&=7\Rightarrow 6~unpaired, 2~paired; 4~unpaired, 4~paired; 2~unpaired, 6~paired.
        \end{aligned}
    \y.
    \]
\end{small}

If $m=3$ and $2$ paired $2$ unpaired, then without loss of generality, we may assume that $a_2+a_3=0,~\forall~j,~a_0+a_j \neq 0,~a_1+a_j \neq 0$.
From \eqref{w-eq} we obtain
\begin{small}
    \[
        W=
        \begin{pmatrix}
            0_{4\times 4}&*\\
            *            &*
        \end{pmatrix},
    \]
\end{small}
which implies that $\varphi$ lies in $\mathbb{C}P^3$.
Applying the same method as described above, we can analyze other possibilities and obtain that either $W$ degenerates or $\varphi$ lies in $\mathbb{C}P^3$ up to $Sp(4)$.

Now, we have found all linearly full totally real flat minimal surfaces in $\mathbb{H}P^3$ that do not lie in $\mathbb{C}P^3$.
They only have three types, namely those belonging to \eqref{first}, \eqref{second} or \eqref{third}.

{\it Step 2.} We will study \eqref{first}, \eqref{second} and \eqref{third} with isotropy order at least $1$.
Then we have $\sum\limits_{j=1}^m r_j=1$ and $\sum\limits_{j=1}^m a_jr_j=0$.
Moreover we have $m=5$ and $a_0+a_3=a_1+a_4=a_2+a_5=0$.
If we put $a_j=c_j+\texttt{i}s_j,~c_j=\cos{\theta_j},~s_j=\sin{\theta_j},~(j=0,1,\cdots,m)$, then we have the following system of equations
\begin{small}
    \begin{sequation}\label{2-eq}
        \begin{cases}
            r_0+r_3+r_1+r_4+r_2+r_5=1,\\
            (r_0-r_3)+c_1(r_1-r_4)+c_2(r_2-r_5)=0,\\
            s_1(r_1-r_4)+s_2(r_2-r_5)=0.
        \end{cases}
    \end{sequation}
\end{small}
Let $\square=s_1c_2-c_1s_2$, then \eqref{2-eq} is equivalent to the following system of equations
\begin{small}
    \begin{sequation}\label{2-eq-first}
        \begin{cases}
            r_1+r_4+r_0+r_3=1-(r_2+r_5),\\
            r_0-r_3=-\frac{\square}{s_1}\cdot(r_2-r_5),\\
            r_1-r_4=-\frac{s_2}{s_1}\cdot(r_2-r_5).
        \end{cases}
    \end{sequation}
\end{small}
From \eqref{2-eq-first}, we can express $r_0,r_1,r_3$ by $r_2,r_4,r_5,\theta_1,\theta_2$.
For \eqref{first} we have $|a|^2+|b|^2+|w_{03}|^2=|w_{14}|^2=|w_{25}|^2=1$, i.e.
\begin{small}
    \begin{sequation}\label{first-mofang=1}
        |a|^2+|b|^2+|w_{03}|^2
        =
        \z|w_{03}\cdot\frac{\xi^0\xi^3}{\xi^1\xi^4}\cdot\frac{\sin\theta_2}{\sin(\theta_1-\theta_2)}\y|^2
        =
        \z|w_{03}\cdot\frac{\xi^0\xi^3}{\xi^2\xi^5}\cdot\frac{\sin\theta_1}{\sin(\theta_2-\theta_1)}\y|^2
        =1.
    \end{sequation}
\end{small}
Then we can get $s_1^2r_1r_4=s_2^2r_2r_5$ from the above.
After expressing $r_0,r_1,r_3$ by $r_2,r_4,r_5,\theta_1,\theta_2$, we obtain a quadratic equation in $r_4$ with parameters as follows:
\begin{small}
    \[
        s_1^2\cdot r_4\cdot\z( r_4-\frac{s_2}{s_1}\cdot(r_2-r_5) \y)=s_2^2r_2r_5.
    \]
\end{small}
Since $r_4>0$, then a straightforward calculation shows $r_4=\frac{s_2r_2}{s_1}$.
By virtue of this, together with \eqref{2-eq-first} we can express $r_0,r_3,r_1,r_4$ by $r_2,r_5,\theta_1,\theta_2$ as follows:
\begin{small}
    \begin{sequation}
        \begin{cases}
            r_4=\frac{s_2r_2}{s_1},\\
            r_1=\frac{s_2r_5}{s_1},\\
            r_0=\frac{1}{2}\z( 1-r_2-r_5-\frac{s_2+\square}{s_1}r_2-\frac{s_2-\square}{s_1}r_5 \y),\\
            r_3=\frac{1}{2}\z( 1-r_2-r_5-\frac{s_2+\square}{s_1}r_5-\frac{s_2-\square}{s_1}r_2 \y).\\
        \end{cases}
    \end{sequation}
\end{small}
Next, we determine the constrains on our parameters $r_2,r_5,\theta_1,\theta_2$.
Since they satisfy the following conditions:
\begin{small}
    \begin{sequation}\label{guideline}
        \begin{cases}
            0<|a|^2+|b|^2<1,\\
            r_0,r_1,\cdots,r_5\in (0,1),\\
            0=\theta_0<\theta_1<\cdots<\theta_5<2\pi,
        \end{cases}
    \end{sequation}
\end{small}
then for \eqref{first} of isotropy order at least $1$, we have $0<\theta_1<\theta_2<\pi$ and $r_2,r_5>0$.
Thus we naturally have $r_1,r_4>0$.
From $r_1+r_4+r_2+r_5<1$ and $r_0,r_3>0$, we obtain $(1+\frac{s_2+\square}{s_1})r_2+(1+\frac{s_2-\square}{s_1})r_5<1$ and $(1+\frac{s_2+\square}{s_1})r_5+(1+\frac{s_2-\square}{s_1})r_2<1$.
Moreover we get $\frac{r_2r_5}{r_0r_3}\cdot\frac{\square^2}{s_1^2}<1$.
In all, we have the following constraints:
\begin{small}
    \[
    \z\{
        \begin{aligned}
            &0<\theta_1<\theta_2<\pi,r_2,r_5>0,\\
            &(1+\frac{s_2+\square}{s_1})r_p+(1+\frac{s_2-\square}{s_1})r_q<1,~(p,q)\in \{(2,5),(5,2)\},\\
            &\square^2r_2r_5<s_1^2r_0r_3.
        \end{aligned}
    \y.
    \]
\end{small}
For $a+b\texttt{j}$ in \eqref{first}, we can put $w=\frac{a}{b}$ and up to $Sp(4)$ the forth component of \eqref{first} will be
\begin{small}
    \[
        |b|\cdot\sqrt{r_3}\cdot e^{\bar{a}_0\bar{z}-a_0z}(w+\texttt{j}).
    \]
\end{small}
From \eqref{first-mofang=1} it is easy to see that $|b|$ can be expressed by $r_2,r_5,\theta_1,\theta_2$ and $w$.
If the isotropy order of \eqref{first} is at least $1$, then up to $Sp(4)$, $\varphi$ depends on five parameters, including four real and one complex.

For \eqref{second}, we employ a similar approach to derive the following equations
\begin{small}
    \begin{sequation}
        \begin{cases}
            r_2=-\frac{s_1r_0}{\square},\\
            r_5=-\frac{s_1r_3}{\square},\\
            r_1=\frac{1}{2}\z( 1-r_0-r_3+\frac{s_1+s_2}{\square}r_0+\frac{s_1-s_2}{\square}r_3 \y),\\
            r_4=\frac{1}{2}\z( 1-r_0-r_3+\frac{s_1+s_2}{\square}r_3+\frac{s_1-s_2}{\square}r_0 \y),\\
        \end{cases}
    \end{sequation}
\end{small}
subject to the following constraints
\begin{small}
    \[
    \z\{
        \begin{aligned}
            &0<\theta_1<\theta_2<\pi,r_0,r_3>0,\\
            &(1-\frac{s_1+s_2}{\square})r_p+(1-\frac{s_1-s_2}{\square})r_q<1,~(p,q)\in \{(0,3),(3,0)\},\\
            &s_2^2r_0r_3<\square^2r_1r_4.
        \end{aligned}
    \y.
    \]
\end{small}
For \eqref{third}, we obtain
\begin{small}
    \begin{sequation}
        \begin{cases}
            r_4=-\frac{s_2r_0}{\square},\\
            r_1=-\frac{s_2r_3}{\square},\\
            r_2=\frac{1}{2} \z( 1-r_0-r_3-\frac{s_1-s_2}{\square}r_0+\frac{s_1+s_2}{\square}r_3 \y),\\
            r_5=\frac{1}{2} \z( 1-r_0-r_3-\frac{s_1-s_2}{\square}r_3+\frac{s_1+s_2}{\square}r_0 \y),\\
        \end{cases}
    \end{sequation}
\end{small}
subject to
\begin{small}
    \[
    \z\{
        \begin{aligned}
            &0<\theta_1<\theta_2<\pi,r_0,r_3>0,\\
            &(1+\frac{s_1-s_2}{\square})r_p+(1-\frac{s_1+s_2}{\square})r_q<1,~(p,q)\in \{(0,3),(3,0)\},\\
            &s_1^2r_0r_3<\square^2r_2r_5.
        \end{aligned}
    \y.
    \]
\end{small}
From the derivation process of $\varphi$ we know that these surfaces are noncongruent for different parameters, each type is noncongruent too.
Then up to $Sp(4)$, $\varphi$ of each type can always be uniquely determined by five parameters, including four real and one complex.

{\it Step 3.} If the isotropy order of \eqref{first} is $2$, then we have $\sum\limits_{j=0}^m a_j^2r_j=0$.
Next, we compute $\sum\limits_{j=0}^m a_j^2r_j$ for \eqref{first}:
\begin{small}
    \[
        \begin{aligned}
            \sum_{j=0}^5 a_j^2r_j
            &=(r_0+r_3)+a_1^2(r_1+r_4)+a_2^2(r_2+r_5)\\
            &=\z( 1-\frac{s_2r_2}{s_1}-\frac{s_2r_5}{s_1}-r_2-r_5 \y)+a_1^2\z( \frac{s_2r_2}{s_1}+\frac{s_2r_5}{s_1} \y)+a_2^2\z( r_2+r_5 \y)\\
            &=(r_2+r_5)\cdot\z[ \z(\frac{1}{r_2+r_5}-(\frac{s_2}{s_1}+1)+\frac{(2c_1^2-1)s_2+(2c_2^2-1)s_1}{s_1}\y)+\texttt{i}\frac{2s_1c_1s_2+2s_2c_2s_1}{s_1} \y].
        \end{aligned}
    \]
\end{small}
So
\begin{small}
    \[
        \mathbf{Re}(\sum_{j=0}^5 a_j^2r_j)=\mathbf{Im}(\sum_{j=0}^5 a_j^2r_j)=0,
    \]
\end{small}
i.e.
\begin{small}
    \[
        \frac{1}{r_2+r_5}-(\frac{s_2}{s_1}+1)+\frac{(2c_1^2-1)s_2+(2c_2^2-1)s_1}{s_1}=\frac{2s_1c_1s_2+2s_2c_2s_1}{s_1}=0.
    \]
\end{small}
Easy computation shows that the above equation is equivalent to that $\theta_1+\theta_2=\pi$ and $r_2+r_5=\frac{1}{4s_1^2}$, then we can express $\theta_2,r_0,r_3,r_1,r_4,r_5$ by $\theta_1,r_2$, i.e.
\begin{small}
    \begin{sequation}\label{tongjie}
        \begin{cases}
            \theta_2=\pi-\theta_1,\\
            r_4=\frac{s_2r_2}{s_1}=r_2,\\
            r_5=\frac{1}{4s_1^2}-r_2,\\
            r_1=\frac{s_2r_5}{s_1}=r_5=\frac{1}{4s_1^2}-r_2,\\
            r_0=\frac{1}{2}\z( 1-r_2-r_5-\frac{s_2+\square}{s_1}r_2-\frac{s_2-\square}{s_1}r_5 \y)
                =\frac{1}{2}-\frac{1}{4s_1^2}-\frac{c_1}{4s_1^2}+2c_1r_2,\\
            r_3=\frac{1}{2}\z( 1-r_2-r_5-\frac{s_2+\square}{s_1}r_5-\frac{s_2-\square}{s_1}r_2 \y)
                =\frac{1}{2}-\frac{1}{4s_1^2}+\frac{c_1}{4s_1^2}-2c_1r_2.\\
        \end{cases}
    \end{sequation}
\end{small}
Since $\theta_1<\theta_2$, then $0<\theta_1<\frac{\pi}{2}$.
Because $r_2,r_5>0$, then $0<r_2<\frac{1}{4s_1^2}$.
From $r_0,r_3>0$, we can obtain $\frac{2c_1-1}{8c_1(1-c_1)} < r_2 < \frac{2c_1+1}{8c_1(1+c_1)}$.
Next, solving inequality $\frac{r_2r_5}{r_0r_3}\cdot\frac{\square^2}{s_1^2} < 1$ yields $4s_1^2 > 3$, i.e., $\frac{\pi}{3}<\theta_1<\frac{\pi}{2}$, which implies that $\frac{2c_1-1}{8c_1(1-c_1)}<0$ and $\frac{1}{4s_1^2} < \frac{2c_1+1}{8c_1(1+c_1)}$.
So our constraint region reduces to
\[
    \left\{~
        \frac{\pi}{3}<\theta_1<\frac{\pi}{2},
        0<r_2<\frac{1}{4s_1^2},
        w\in \mathbb{C}~
    \right\}.
\]
Moreover up to $Sp(4)$, \eqref{first} can be simplified into
\begin{small}
    \begin{seqnarray}\label{new-first}
        \varphi
        &=&
        \begin{bmatrix}
            (\xi^0-w_{03}\cdot \xi^3\cdot \texttt{j})\cdot e^{z-\bar{z}}\\
            (\xi^1-w_{03}\cdot\frac{\xi^0\xi^3}{\xi^1}\cdot\frac{\sin\theta_2}{\sin(\theta_1-\theta_2)}\cdot \texttt{j})\cdot e^{a_1z-\bar{a}_1\bar{z}}\\
            (\xi^2-w_{03}\cdot\frac{\xi^0\xi^3}{\xi^2}\cdot\frac{\sin\theta_1}{\sin(\theta_2-\theta_1)}\cdot \texttt{j})\cdot e^{a_2z-\bar{a}_2\bar{z}}\\
            \xi^3 e^{a_3z-\bar{a}_3\bar{z}}(a+b\texttt{j})
        \end{bmatrix}
        =
        \begin{bmatrix}
            \sqrt{r_0+|w_{03}|^2r_3}\cdot e^{z-\overline{z}}\\
            \sqrt{r_1+r_2} \cdot e^{a_1z-\overline{a}_1\overline{z}}\\
            \sqrt{r_1+r_2}\cdot e^{-\overline{a}_1z+a_1\overline{z}}\\
            \sqrt{r_3}\cdot e^{-z+\overline{z}}(a+b\texttt{j})
        \end{bmatrix}\nonumber\\
        &=&
        \begin{bmatrix}
            \sqrt{\frac{4c_1^2r_2(\frac{1}{4s_1^2}-r_2)}{\frac{1}{2}-\frac{1}{4s_1^2}-\frac{c_1}{4s_1^2}+2c_1r_2}+\frac{1}{2}-\frac{1}{4s_1^2}-\frac{c_1}{4s_1^2}+2c_1r_2}\cdot e^{z-\overline{z}}\\
            \frac{1}{2s_1}\cdot e^{a_1z-\overline{a}_1\overline{z}}\\
            \frac{1}{2s_1}\cdot e^{-\overline{a}_1z+a_1\overline{z}}\\
            |b|\cdot\sqrt{\frac{1}{2}-\frac{1}{4s_1^2}+\frac{c_1}{4s_1^2}-2c_1r_2}\cdot e^{-z+\overline{z}}(w+\texttt{j})
        \end{bmatrix}.
    \end{seqnarray}
\end{small}
Applying similar computation to \eqref{second} and \eqref{third} gives us
\begin{small}
    \begin{seqnarray}\label{new-second}
        \varphi
        &=&
        \begin{bmatrix}
            \frac{1}{2s_1}\cdot e^{z-\overline{z}}\\
            \sqrt{\frac{4c_1^2r_0(\frac{1}{4s_1^2}-r_0)}{\frac{1}{2}-\frac{1}{4s_1^2}-2c_1r_0+\frac{c_1}{4s_1^2}}+\frac{1}{2}-\frac{1}{4s_1^2}-2c_1r_0+\frac{c_1}{4s_1^2}}\cdot e^{a_1z-\overline{a}_1\overline{z}}\\
            \frac{1}{2s_1}\cdot e^{a_1^2z-\overline{a}_1^2\overline{z}}\\
            |b|\cdot\sqrt{\frac{1}{2}-\frac{1}{4s_1^2}+2c_1r_0-\frac{c_1}{4s_1^2}} \cdot e^{\overline{a}_1\overline{z}-a_1z}(w+\texttt{j})
        \end{bmatrix},
        \quad
        \z\{
            \begin{aligned}
                \frac{\pi}{3}<&\theta_1<\frac{\pi}{2},\\
                0<&r_0<\frac{1}{4s_1^2},\\
                w&\in \mathbb{C},
            \end{aligned}
        \y.
    \end{seqnarray}
\end{small}
and
\begin{small}
    \begin{seqnarray}\label{new-third}
        \varphi
        &=&
        \begin{bmatrix}
            \frac{1}{2s_2}\cdot e^{z-\overline{z}}\\
            \frac{1}{2s_2}\cdot e^{\bar{a}_2^2\bar{z}-a_2^2z}\\
            \sqrt{\frac{4c_2^2r_0(\frac{1}{4s_2^2}-r_0)}{\frac{1}{2}-\frac{1}{4s_2^2}-2c_2r_0+\frac{c_2}{4s_2^2}}+\frac{1}{2}-\frac{1}{4s_2^2}-2c_2r_0+\frac{c_2}{4s_2^2}}\cdot e^{a_2z-\overline{a}_2\overline{z}}\\
            |b|\cdot\sqrt{\frac{1}{2}-\frac{1}{4s_2^2}+2c_2r_0-\frac{c_2}{4s_2^2}}\cdot e^{\overline{a}_2\overline{z}-a_2z}(w+\texttt{j})
        \end{bmatrix},
        \quad
        \z\{
            \begin{aligned}
                \frac{\pi}{2}<&\theta_2<\frac{2\pi}{3},\\
                0<&r_0<\frac{1}{4s_2^2},\\
                w&\in \mathbb{C},
            \end{aligned}
        \y.
    \end{seqnarray}
\end{small}
respectively.
In order to unify the constraint regions, we put $\theta=\theta_1,r=r_2$ for \eqref{new-first}, put $\theta=\theta_1,r=r_0$ for \eqref{new-second} and put $\theta=\pi-\theta_2,r=r_0$ for \eqref{new-third}.
Then \eqref{new-third} will become
\begin{small}
    \begin{seqnarray}\label{new-third-theta changed}
        \varphi
        &=&
        \begin{bmatrix}
            \frac{1}{2s_2}\cdot e^{z-\overline{z}}\\
            \frac{1}{2s_2}\cdot e^{\frac{\bar{z}}{\bar{a}_2^2}-\frac{z}{a_2^2}}\\
            \sqrt{\frac{4c_2^2r_0(\frac{1}{4s_2^2}-r_0)}{\frac{1}{2}-\frac{1}{4s_2^2}+2c_2r_0-\frac{c_2}{4s_2^2}}+\frac{1}{2}-\frac{1}{4s_2^2}+2c_2r_0-\frac{c_2}{4s_2^2}}\cdot e^{\frac{\bar{z}}{\bar{a}_2}-\frac{z}{a_2}}\\
            |b|\cdot\sqrt{\frac{1}{2}-\frac{1}{4s_2^2}-2c_2r_0+\frac{c_2}{4s_2^2}}\cdot e^{\frac{z}{a_2}-\frac{\bar{z}}{\bar{a}_2}}(w+\texttt{j})
        \end{bmatrix},
        \quad
        \z\{
            \begin{aligned}
                \frac{\pi}{3}<&\theta<\frac{\pi}{2},\\
                0<&r<\frac{1}{4\sin^2\theta},\\
                w&\in \mathbb{C}.
            \end{aligned}
        \y.
    \end{seqnarray}
\end{small}
It is obvious that \eqref{new-first}, \eqref{new-second} and \eqref{new-third} (i.e., \eqref{new-third-theta changed}) are noncongruent for different values of the parameters, but these expressions have similar form, i.e.
\begin{small}
    \begin{sequation}\label{varphi-general-form}
        \varphi(z)=
        \begin{bmatrix}
            \eta^0 e^{a_0z-\overline{a}_0\overline{z}}\\
            \eta^1 e^{a_1z-\overline{a}_1\overline{z}}\\
            \eta^2 e^{a_2z-\overline{a}_2\overline{z}}\\
            \eta^3 e^{\overline{a}_k\overline{z}-a_kz} \cdot (w+\texttt{j})
        \end{bmatrix}
        \quad
        (k=0,~1,~2),
    \end{sequation}
\end{small}
where $\eta^j\in\mathbb{R^+}~(j=0,~1,~2,~3),~w\in\mathbb{C}$.
Then up to $Sp(4)$, $\varphi$ of each type is uniquely determined by only three parameters $\theta,r,w$ in $\mathbb{R}^2\times \mathbb{C}$, which satisfy
\begin{small}
    \[
        \frac{\pi}{3}<\theta<\frac{\pi}{2},
        ~0<r<\frac{1}{4\sin^2\theta},
        ~w\in \mathbb{C}.
    \]
\end{small}
Summing up, we complete our proofs.
\end{proof}

Now we describe the moduli space of the totally real flat minimal immersions from $\mathbb{C}$ into $\mathbb{H}P^3$ that do not lie in $\mathbb{C}P^3$.
Applying Proposition \ref{prop1}, we can obtain the following theorem.
\begin{theorem}\label{moduli-space-C}
    Let $\mathcal{M}_3(\mathbb{C})$ denote the moduli space of all noncongruent linearly full totally real flat minimal immersions from $\mathbb{C}$ into $\mathbb{H}P^3$ that do not lie in $\mathbb{C}P^3$, then $\mathcal{M}_3(\mathbb{C})$ has three components, each of which is a manifold of real dimension $6$ and intersects with two real hypersurfaces at
    \begin{small}
    \begin{equation*}
        \Gamma_3(\mathbb{C})=\z\{~ (\theta,r,w)\in \mathbb{R}^2\times\mathbb{C} ~\z|~ \frac{\pi}{3}<\theta<\frac{\pi}{2},~
        0<r<\frac{1}{4\sin^2{\theta}}\y.~ \y\}.
    \end{equation*}
    \end{small}
\end{theorem}
\begin{proof}
    Taking $M=\mathbb{C}$ in Proposition \ref{prop1}, we know that all linearly full totally real flat minimal immersions from $\mathbb{C}$ into $\mathbb{H}P^3$ that do not lie in $\mathbb{C}P^3$ only have three types, and if the isotropy order is $2$ then the parameter spaces of these surfaces are all $\Gamma_3(\mathbb{C})$.
    Since $\Gamma_3(\mathbb{C}) \neq \varnothing$, then we have $(\Gamma_3(\mathbb{C})\subset)\mathcal{M}_3(\mathbb{C}) \neq  \varnothing$.
    Notice that these three types are noncongruent for different parameters, so $\mathcal{M}_3(\mathbb{C})$ have three components.
    From \eqref{guideline} we know that each component is an open subset of the Euclidean space, and thus a manifold.
    From Proposition \ref{prop1}, we know that $\varphi$ of each type can be uniquely determined by five parameters, including four real and one complex.
    So each component of $\mathcal{M}_3(\mathbb{C})$ is a manifold of real dimension $6$.
    When the isotropy order is $2$, two equations have been added to the parameter spaces of these three types.
    It makes each one reduce to $\Gamma_3(\mathbb{C})$.
    Hence each component of $\mathcal{M}_3(\mathbb{C})$ intersects with two real hypersurfaces at $\Gamma_3(\mathbb{C})$.
    Thus we have finished our proof.
\end{proof}
\begin{remark}
    Figure \ref{GC} gives the constraint region of $(\theta,r)$ in $\Gamma_3(\mathbb{C})$.
    Here the minimal surfaces corresponding to the points on the boundary lie in $\mathbb{C}P^3$.
    \begin{figure}[htb]
        \center{\includegraphics[width=10cm]{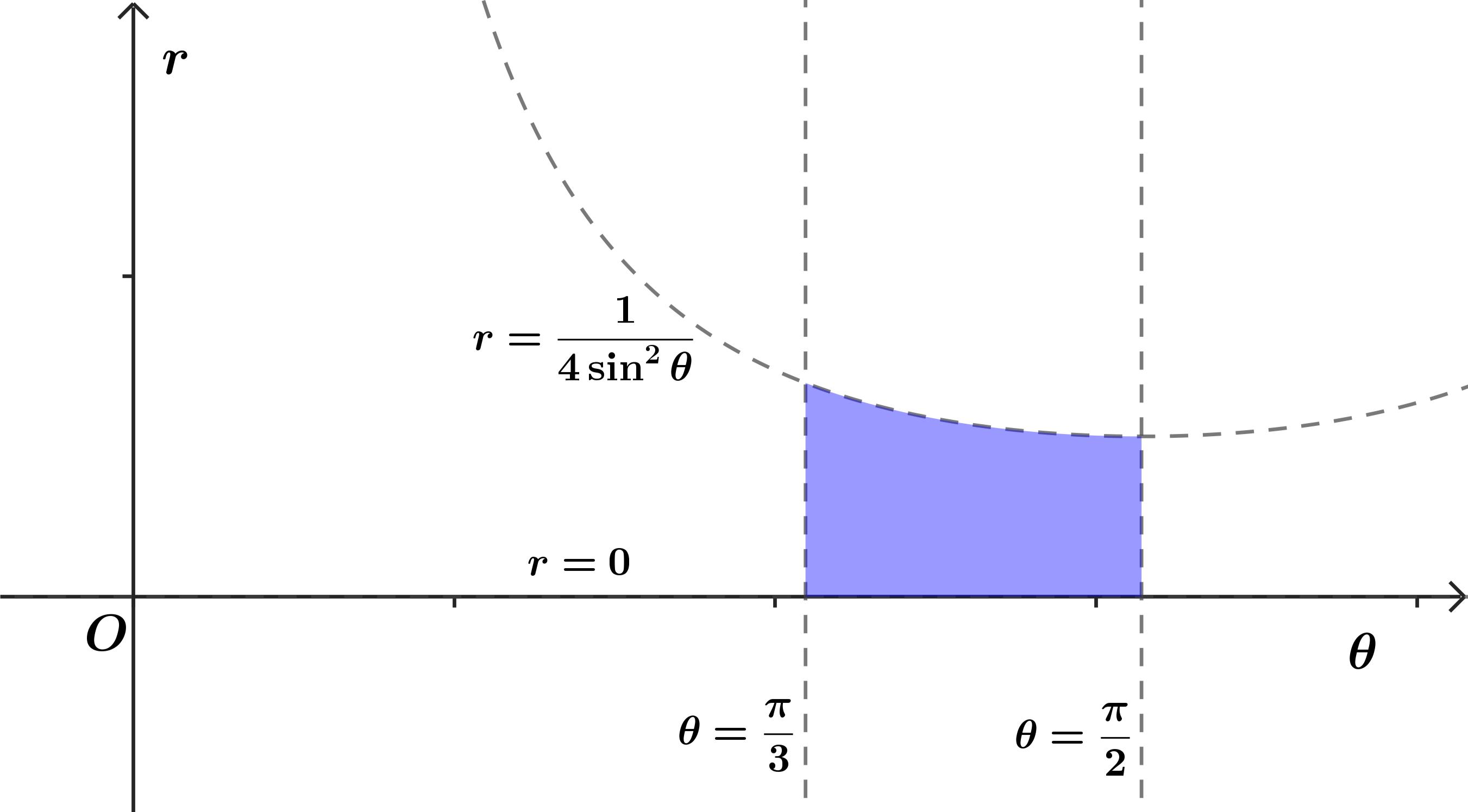}}
        \caption{$\frac{\pi}{3}<\theta<\frac{\pi}{2},0<r<\frac{1}{4\sin^2{\theta}}$}
        \label{GC}
    \end{figure}
\end{remark}

\begin{remark}
    For totally real flat minimal surfaces in $\mathbb{H}P^3$, a unit increase in the isotropy order decreases the dimension of the moduli space by $2$ real parameters.
    Is this phenomenon universal in $\mathbb{H}P^n$?
\end{remark}

\section{Tori}
\label{sec4}

Next, we will consider when $\varphi$ can descend to the torus.
Notice that although the isotropy order of \eqref{first}, \eqref{second} and \eqref{third} may be $1$ or $2$, their expressions have similar form, i.e., \eqref{varphi-general-form}.
Therefore we discuss the corresponding torus criterion for \eqref{varphi-general-form} when $k=0,~1,~2$.
For points $z,~z'\in\mathbb{C}$, $\varphi(z)=\varphi(z')$ if and only if
\begin{small}
    \[
        \begin{pmatrix}
            \eta^0 e^{a_0z-\overline{a}_0\overline{z}}\\
            \eta^1 e^{a_1z-\overline{a}_1\overline{z}}\\
            \eta^2 e^{a_2z-\overline{a}_2\overline{z}}\\
            \eta^3 e^{\overline{a}_k\overline{z}-a_kz}(w+\texttt{j})
        \end{pmatrix}
        =
        \begin{pmatrix}
            \eta^0 e^{a_0z'-\overline{a}_0\overline{z}'}\\
            \eta^1 e^{a_1z'-\overline{a}_1\overline{z}'}\\
            \eta^2 e^{a_2z'-\overline{a}_2\overline{z}'}\\
            \eta^3 e^{\overline{a}_k\overline{z}'-a_kz'}(w+\texttt{j})
        \end{pmatrix}\cdot q,
    \]
\end{small}
where $q\in\mathbb{H}$ and $|q|=1$.
But it is easy to find that $q\in\mathbb{C}$.
Since the corresponding components are equal, we know that if $w=0$, then
\begin{small}
    \[
        q=
        \frac{e^{a_0z-\overline{a}_0\overline{z}}}{e^{a_0z'-\overline{a}_0\overline{z}'}}=
        \frac{e^{a_1z-\overline{a}_1\overline{z}}}{e^{a_1z'-\overline{a}_1\overline{z}'}}=
        \frac{e^{a_2z-\overline{a}_2\overline{z}}}{e^{a_2z'-\overline{a}_2\overline{z}'}}.
    \]
\end{small}
Let
\begin{small}
    \begin{sequation}\label{Lambda}
        \Lambda_{\varphi}=\{ z=x+\texttt{i}y\in\mathbb{C} \mid s_jx+(c_j-1)y \equiv 0\pmod{\pi}~(j=1,2) \},
    \end{sequation}
\end{small}
then $\varphi(z)=\varphi(z')$ if and only if $z-z'\in\Lambda_{\varphi}$.
The same argument as in \cite{Liao} shows that $\varphi$ descends to the torus if and only if $rank(\Lambda_{\varphi})=2$, which is equivalent to
\begin{small}
    \begin{sequation}\label{3tc}
        dim_{\mathbb{Q}} span_{\mathbb{Q}} \{ (s_1,c_1-1),~(s_2,c_2-1) \}=2.
    \end{sequation}
\end{small}
Notice that $\theta_1,~\theta_2\in (0,\pi)$ and $\theta_1\neq \theta_2$, then \eqref{3tc} holds and $\varphi$ always descends to the torus now.

If $w\neq 0$, then
\begin{small}
    \[
        q=
        \frac{e^{a_0z-\overline{a}_0\overline{z}}}{e^{a_0z'-\overline{a}_0\overline{z}'}}=
        \frac{e^{a_1z-\overline{a}_1\overline{z}}}{e^{a_1z'-\overline{a}_1\overline{z}'}}=
        \frac{e^{a_2z-\overline{a}_2\overline{z}}}{e^{a_2z'-\overline{a}_2\overline{z}'}}=
        \frac{e^{a_kz-\overline{a}_k\overline{z}}}{e^{a_kz'-\overline{a}_k\overline{z}'}}=
        \overline{\z( \frac{e^{a_kz-\overline{a}_k\overline{z}}}{e^{a_kz'-\overline{a}_k\overline{z}'}} \y)},
    \]
\end{small}
which implies $q\in\mathbb{R}$, but we also have $|q|=1$, so we obtain that
\begin{small}
    \begin{sequation}\label{w-feiling}
        \frac{e^{a_0z-\overline{a}_0\overline{z}}}{e^{a_0z'-\overline{a}_0\overline{z}'}}=
        \frac{e^{a_1z-\overline{a}_1\overline{z}}}{e^{a_1z'-\overline{a}_1\overline{z}'}}=
        \frac{e^{a_2z-\overline{a}_2\overline{z}}}{e^{a_2z'-\overline{a}_2\overline{z}'}}=q=\pm 1.
    \end{sequation}
\end{small}
If $q=1$ in \eqref{w-feiling}, then $e^{a_jz-\bar{a}_j\bar{z}}=e^{a_jz'-\bar{a}_j\bar{z}'}$ which implies that $\text{Im} [a_j(z-z')]\equiv 0\pmod{\pi}~(j=0,1,2)$.
We put $z-z'=x+\texttt{i}y$, then we have $\text{Im} [a_j(z-z')]=s_jx+c_jy=k_j\pi~(k_j\in \mathbb{Z})$.
Let
\begin{small}
    \begin{sequation}\label{Lambda'}
        \Lambda_{\varphi}'=\{ z=x+\texttt{i}y\in\mathbb{C} \mid s_jx+(c_j-1)y \equiv 0\pmod{\pi}~(j=1,2),y \equiv 0\pmod{\pi} \},
    \end{sequation}
\end{small}
then $\varphi(z)=\varphi(z')$ if and only if $z-z'\in \Lambda_{\varphi}'$.
Similarly, if $q=-1$ in \eqref{w-feiling}, we can let
\begin{small}
    \begin{sequation}\label{Lambda''}
        \Lambda_{\varphi}''=\{ z=x+\texttt{i}y\in\mathbb{C} \mid s_jx+(c_j-1)y \equiv 0\pmod{\pi}~(j=1,2),y-\frac{\pi}{2} \equiv 0\pmod{\pi} \},
    \end{sequation}
\end{small}
then $\varphi(z)=\varphi(z')$ if and only if $z-z'\in \Lambda_{\varphi}''$.
We can easily find that $\Lambda_{\varphi}',\Lambda_{\varphi}''\subset \Lambda_{\varphi}$ and $\Lambda_{\varphi}''=\Lambda_{\varphi}'+\frac{\pi}{2}\texttt{i}$, so $0\leq rank(\Lambda_{\varphi}')=rank(\Lambda_{\varphi}'')\leq rank(\Lambda_{\varphi})=2$.
In the following we only compute $rank(\Lambda_{\varphi}')$.
Since $y \equiv 0\pmod{\pi}$ in \eqref{Lambda'}, then we put $y=k\pi~(k\in\mathbb{Z})$, which implies $s_jx \equiv -(c_j-1)k\pi \pmod{\pi}$.
Put $s_jx=m\pi-(c_j-1)k\pi~(m\in\mathbb{Z},j=1,2)$, then
\begin{small}
    \[
        x=\frac{\pi[m-(c_1-1)k]}{s_1}=\frac{\pi[m-(c_2-1)k]}{s_2},
    \]
\end{small}
which shows that for given $\theta_1$ and $\theta_2$, integers $m$ and $k$ must satisfy the following equation:
\begin{small}
    \[
        \frac{m-(c_1-1)k}{s_1}=\frac{m-(c_2-1)k}{s_2}.
    \]
\end{small}
This is equivalent to
\begin{small}
    \begin{sequation}\label{mk-eq}
        (s_1-s_2)m=[s_1(c_2-1)-s_2(c_1-1)]k.
    \end{sequation}
\end{small}
Since $x=\frac{\pi[m-(c_1-1)k]}{s_1}$ and $y=k\pi$, then from \eqref{mk-eq} it is easy to find that $rank(\Lambda_{\varphi}')\leq 1$.
From $rank(\Lambda_{\varphi}'')=rank(\Lambda_{\varphi}')$ we quickly obtain $rank(\Lambda_{\varphi}'')\leq 1$.
The same argument as in \cite{Liao} shows that if $w\neq 0$, then $\varphi$ never descends to the torus.
The above discussion gives us the torus criterion as follows.
\begin{theorem}\label{torus-criterion}
    \eqref{first}, \eqref{second} and \eqref{third} descend to the torus if and only if $w=0$.
\end{theorem}

When $\varphi$ descends to the torus, we immediately obtain the following theorem from Theorem \ref{moduli-space-C} and Theorem \ref{torus-criterion}.
\begin{theorem}\label{moduli-space-tori}
    Let $\mathcal{M}_3(\mathbb{T})$ denote the moduli space of all noncongruent linearly full totally real flat minimal tori in $\mathbb{H}P^3$ that do not lie in $\mathbb{C}P^3$, then $\mathcal{M}_3(\mathbb{T})$ has three components, each of which is a manifold of real dimension $4$ and intersects with two real hypersurfaces at
    \begin{small}
    \begin{equation*}
        \Gamma_3(\mathbb{T})=\z\{ ~(\theta,r)\in \mathbb{R}^2 ~\z|~\frac{\pi}{3}<\theta<\frac{\pi}{2},~0<r<\frac{1}{4\sin^2{\theta}}\y.~ \y\}.
    \end{equation*}
    \end{small}
\end{theorem}

Next, we will compute $\det(A_{FR}^{\prime\varphi})$ when the isotropy order is $2$.
For this, we shall introduce some background knowledge (cf. \cite{U}).
If $\varphi:\mathbb{T}\to \mathbb{H}P^3$ is a harmonic map, then it generates a harmonic sequence in $G(2,8)$, i.e.,
\begin{small}
    \begin{sequation}\label{varphi-harmonic-sequence}
        \underline{\varphi}=\underline{\varphi}_0
        \stackrel{A_{\underline{\varphi}_0,\underline{\varphi}_1}^{\prime}}{\longrightarrow} \underline{\varphi}_{1}
        \stackrel{A_{\underline{\varphi}_1,\underline{\varphi}_2}^{\prime}}{\longrightarrow} \underline{\varphi}_{2}
        \stackrel{A_{\underline{\varphi}_2,\underline{\varphi}_3}^{\prime}}{\longrightarrow}
        \underline{\varphi}_{3}
        \stackrel{A_{\underline{\varphi}_3,\underline{\varphi}_4}^{\prime}}{\longrightarrow}
        \cdots
        \stackrel{A_{\underline{\varphi}_{m-1},\underline{\varphi}_m}^{\prime}}{\longrightarrow} \underline{\varphi}_{m}
        \stackrel{A_{\underline{\varphi}_m,\underline{\varphi}_{m+1}}^{\prime}}{\longrightarrow}
        \cdots.
    \end{sequation}
\end{small}
Moreover we can define a map called the {\it $\partial^{\prime}$-first return map} for $\varphi$ as follows:
\begin{small}
    \[
        A_{FR}^{\prime\varphi}=
        A_{\underline{\varphi}_2,\underline{\varphi}_0}^{\prime}
        \circ
        A_{\underline{\varphi}_1,\underline{\varphi}_2}^{\prime}
        \circ
        A_{\underline{\varphi}_0,\underline{\varphi}_1}^{\prime}:
        C^{\infty}(\underline{\varphi}_0) \to C^{\infty}(\underline{\varphi}_0),
    \]
\end{small}
where
\begin{small}
    \begin{seqnarray}
         A_{\underline{\varphi}_i,\underline{\varphi}_j}^{\prime}:
        C^{\infty}(\underline{\varphi}_i) &\to& C^{\infty}(\underline{\varphi}_j) \nonumber\\
        v&\mapsto &\pi_{\underline{\varphi}_j}(\partial_z v). \nonumber
    \end{seqnarray}
\end{small}

It is clear that the $\partial^{\prime}$-first return map $A_{FR}^{\prime\varphi}$ for $\varphi$ is a linear transformation on $C^{\infty}(\underline{\varphi}_0)$.
In the following we will compute $\det(A_{FR}^{\prime\varphi})$ for \eqref{new-first}, \eqref{new-second} and \eqref{new-third}.
First of all, it is easy to check that if $[s]:\mathbb{C} \to \mathbb{C}P^7$ is a horizontal lift of $\varphi$, then for all $A \in Sp(4)$, $[As]$ is also horizontal.
Hence, sometimes when we compute the harmonic sequence of $\varphi$, we only need to take the partial derivative with respect to $z$ without applying orthogonal projection.
So
\begin{small}
    \[
        \begin{aligned}
            A_{FR}^{\prime\varphi}(s,\texttt{j}s)&=
            A_{\underline{\varphi}_2,\underline{\varphi}_0}^{\prime}
            \circ
            A_{\underline{\varphi}_1,\underline{\varphi}_2}^{\prime}
            \circ
            A_{\underline{\varphi}_0,\underline{\varphi}_1}^{\prime}
            (s,\texttt{j}s)\\
            &=A_{\underline{\varphi}_2,\underline{\varphi}_0}^{\prime}
            (\partial_z^2 s,\partial_z^2 (\texttt{j}s))\\
            &=(s,\texttt{j}s)
            \begin{pmatrix}
                \z\langle \partial_z^3 s,s \y\rangle&\z\langle \partial_z^3 (\texttt{j}s),s \y\rangle\\
                \z\langle \partial_z^3 s,\texttt{j}s \y\rangle&\z\langle \partial_z^3 (\texttt{j}s),\texttt{j}s \y\rangle
            \end{pmatrix}\\
            &=(s,\texttt{j}s)
            \begin{pmatrix}
                \sum_{j=0}^m a_j^3r_j & \sum_{i,j=0}^m \bar{w}_{ij}\xi^i\xi^j a_j^3 e^{(\overline{a}_i+\overline{a}_j)\overline{z}-(a_i+a_j)z} \\
                \sum_{i,j=0}^m w_{ij}\xi^i\xi^j a_j^3 e^{(a_i+a_j)z-(\overline{a}_i+\overline{a}_j)\overline{z}} & (-1)^3\sum_{j=0}^m a_j^3r_j
            \end{pmatrix},
        \end{aligned}
    \]
\end{small}
where $s=UV_0^{(m)}$. Thus
\begin{tiny}
    \begin{seqnarray}
        \det(A_{FR}^{\prime\varphi})&=&
        \begin{vmatrix}
            \sum_{j=0}^m a_j^3r_j & \sum_{i,j=0}^m \bar{w}_{ij}\xi^i\xi^j a_j^3 e^{(\overline{a}_i+\overline{a}_j)\overline{z}-(a_i+a_j)z}\\
            \sum_{i,j=0}^m w_{ij}\xi^i\xi^j a_j^3 e^{(a_i+a_j)z-(\overline{a}_i+\overline{a}_j)\overline{z}} & (-1)^3\sum_{j=0}^m a_j^3r_j
        \end{vmatrix}\nonumber\\
        &=&-\z( \sum_{j=0}^m a_j^3r_j \y)^2-\z( \sum_{i,j=0}^m \bar{w}_{ij}\xi^i\xi^j a_j^3 e^{(\overline{a}_i+\overline{a}_j)\overline{z}-(a_i+a_j)z} \y)\cdot\z( \sum_{i,j=0}^m w_{ij}\xi^i\xi^j a_j^3 e^{(a_i+a_j)z-(\overline{a}_i+\overline{a}_j)\overline{z}} \y).
    \end{seqnarray}
\end{tiny}
So when \eqref{new-first} descends to the torus, by using \eqref{tongjie} and $\theta_1+\theta_2=\pi$ we can compute
\begin{tiny}
    \begin{seqnarray}\label{1-det-FR}
        \det(A_{FR}^{\prime\varphi})
        &=&-\z( \sum_{j=0}^m a_j^3r_j \y)^2-\z( \sum_{i,j=0}^m \bar{w}_{ij}\xi^i\xi^j a_j^3 e^{(\overline{a}_i+\overline{a}_j)\overline{z}-(a_i+a_j)z} \y)\cdot\z( \sum_{i,j=0}^m w_{ij}\xi^i\xi^j a_j^3 e^{(a_i+a_j)z-(\overline{a}_i+\overline{a}_j)\overline{z}} \y)\nonumber\\
        &=&-\z( (r_0-r_3)+a_1^3(r_1-r_4)+a_2^3(r_2-r_5) \y)^2\nonumber\\
        & &-\z( -2\bar{w}_{03}\xi^0\xi^3a_0^3-2\bar{w}_{14}\xi^1\xi^4a_1^3-2\bar{w}_{25}\xi^2\xi^5a_2^3 \y)
        \cdot
        \z( -2w_{03}\xi^0\xi^3a_0^3-2w_{14}\xi^1\xi^4a_1^3-2w_{25}\xi^2\xi^5a_2^3 \y)\nonumber\\
        &=&-\z( (r_0-r_3)+a_1^3(r_1-r_4)+a_2^3(r_2-r_5) \y)^2-4|w_{03}|^2r_0r_3\z( a_0^3+\frac{\sin\theta_2}{\sin(\theta_1-\theta_2)}a_1^3+\frac{\sin\theta_1}{\sin(\theta_2-\theta_1)}a_2^3 \y)^2\nonumber\\
        &=&-\z[(r_0-r_3)^2+4|w_{03}|^2r_0r_3\y]\cdot\z( 1-\frac{1}{2c_1}a_1^3-\frac{1}{2c_1}\bar{a}_1^3 \y)^2.
    \end{seqnarray}
\end{tiny}
Notice that the first factor of \eqref{1-det-FR} does not equal to $0$, and easy computation shows that the second factor of \eqref{1-det-FR} does not equal to $0$ too.
This implies that the isotropy order of \eqref{new-first} is just $2$.
By performing similar computations on \eqref{new-second} and \eqref{new-third}, we obtain
\begin{small}
    \begin{seqnarray}\label{2-det-FR}
        \det(A_{FR}^{\prime\varphi})
        &=&-\z[(r_0-r_3)^2+4|w_{03}|^2r_0r_3\y]\cdot\z( 1-2c_1a_1^3+a_1^6 \y)^2
    \end{seqnarray}
\end{small}
and
\begin{small}
    \begin{seqnarray}\label{3-det-FR}
        \det(A_{FR}^{\prime\varphi})
        &=&-\z[ (r_0-r_3)^2+4|w_{03}|^2r_0r_3 \y]\cdot\z( 1-2c_2a_2^3+a_2^6 \y)^2
    \end{seqnarray}
\end{small}
respectively.
Then we consider the following equation
\begin{small}
    \begin{sequation}\label{6-order-equ}
        a^6-2ca^3+1=0,
    \end{sequation}
\end{small}
where $a=e^{\texttt{i}\theta}=c+\texttt{i}s,~c=\cos{\theta},~s=\sin{\theta},~\frac{\pi}{3}<\theta<\frac{\pi}{2}$ or $\frac{\pi}{2}<\theta<\frac{2\pi}{3}$.
Put $t=a^3$, then \eqref{6-order-equ} becomes $t^2-2ct+1=0$, and the complex roots of this equation given by the quadratic formula are as follows:
\begin{small}
    \[
        t_{1,2}=\frac{2c\pm\sqrt{-4s^2}}{2}=c\pm \texttt{i}s=e^{\pm \texttt{i}\theta}.
    \]
\end{small}
Since $t=a^3=e^{\texttt{i}\cdot 3\theta}$, together with the range of $\theta$, if the above equation has roots, then we must have $3\theta=2\pi-\theta$, so $\theta=\frac{\pi}{2}$ that does not lie in the range of $\theta$.
Therefore \eqref{6-order-equ} has no roots.
As a result, $\det(A_{FR}^{\prime\varphi})\neq 0$ for \eqref{new-second} and \eqref{new-third}.
When \eqref{new-first}, \eqref{new-second} and \eqref{new-third} descend to the torus, we always have $\det(A_{FR}^{\prime\varphi})\neq 0$, by \cite{U} we quickly obtain the following theorem.
\begin{theorem}
\label{isotropy-order-2}
    When \eqref{new-first}, \eqref{new-second} and \eqref{new-third} descend to the torus, each is covered by a primitive harmonic map of finite type into $SU(8)/S(2U(2) \times U(4))$.
\end{theorem}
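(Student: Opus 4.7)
The plan is to invoke Udagawa's finite-type criterion from \cite{U}, which asserts that a harmonic two-torus into $\mathbb{H}P^n \subset G(2,2n+2)$ whose endomorphism $A_{FR}^{\prime\varphi}$ of $C^{\infty}(\underline{\varphi}_0)$ has nonzero determinant is necessarily covered by a primitive harmonic map of finite type into the associated flag manifold. Once this criterion is in hand, the proof is reduced to verifying $\det(A_{FR}^{\prime\varphi}) \neq 0$ for each of the three families \eqref{new-first}, \eqref{new-second}, \eqref{new-third} whenever they descend to the torus; the target flag manifold $SU(8)/S(U(2)\times U(2) \times U(4))$ is then read off from the $(2,2,4)$-type flag $\underline{\varphi}_0 \oplus \underline{\varphi}_1 \oplus (\underline{\varphi}_0 \oplus \underline{\varphi}_1)^{\perp}$ that arises from isotropy order exactly $2$.

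To compute $A_{FR}^{\prime\varphi}$, I would exploit the fact that the horizontality of the twistor lift $s$ is preserved under $Sp(4)$, so successive $\partial_z$-derivatives of $s$ and $\texttt{j}s$ stay within the harmonic-sequence subbundles and no intermediate orthogonal projections are required. This reduces $A_{FR}^{\prime\varphi}$ to a $2\times 2$ matrix whose entries are inner products involving $\partial_z^3 s$ and $\partial_z^3(\texttt{j}s)$, expressible in terms of the parameters $w_{ij}, \xi^i, a_j$ from Proposition \ref{prop1}. A direct expansion, together with the isotropy-order-$2$ relations $\theta_1+\theta_2=\pi$ and $r_2+r_5 = 1/(4\sin^2\theta_1)$, then produces the factorization $\det(A_{FR}^{\prime\varphi}) = -\bigl[(r_0-r_3)^2 + 4|w_{03}|^2 r_0 r_3\bigr]\cdot P(a)^2$, where the bracketed term is strictly positive on the parameter region and $P(a)$ is a trigonometric polynomial in $a=e^{\texttt{i}\theta}$.

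The main obstacle is to show $P(a) \neq 0$ throughout the parameter range. For types \eqref{new-second} and \eqref{new-third} the polynomial assumes the form $a^6 - 2c\,a^3 + 1$; substituting $t = a^3$ gives a quadratic with roots $e^{\pm\texttt{i}\theta}$, so vanishing would force $3\theta \equiv \pm\theta\pmod{2\pi}$, hence $\theta = \pi/2$, which lies outside the open range $\theta \in (\pi/3,\pi/2)$. For type \eqref{new-first} the factor takes the slightly different shape $1 - \tfrac{1}{2c_1}(a_1^3 + \bar a_1^3)$, and the same kind of trigonometric argument rules out its vanishing. After this nondegeneracy check, Udagawa's criterion delivers the finite-type statement and identifies the covering flag manifold, completing the proof.
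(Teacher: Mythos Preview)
Your proposal is correct and follows essentially the same approach as the paper: both compute $\det(A_{FR}^{\prime\varphi})$ via the horizontal lift to obtain the factorization $-\bigl[(r_0-r_3)^2+4|w_{03}|^2r_0r_3\bigr]\cdot P(a)^2$, verify that each factor is nonzero on the parameter range (the quadratic-in-$a^3$ argument for \eqref{new-second}, \eqref{new-third} and a direct trigonometric check for \eqref{new-first}), and then invoke Udagawa's finite-type criterion from \cite{U}. The only cosmetic difference is that the paper does the determinant computation in the text preceding the theorem, so its formal proof is a one-line citation of \cite{U}.
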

\begin{proof}
    Since $\det(A_{FR}^{\prime\varphi})\neq 0$, then by the last theorem of \cite{U} we know that each $\varphi$ of type \eqref{new-first}, \eqref{new-second} or \eqref{new-third} is covered by a primitive harmonic map of finite type into $SU(8)/S(2U(2) \times U(4))$.
\end{proof}

\begin{remark}
    If $\varphi$ is a linearly full totally real flat minimal surface of isotropy order $2$ in $\mathbb{H}P^3$ that lies in $\mathbb{C}P^3$ up to $Sp(4)$, then we can denote $\varphi$ by $[s]$ and compute $\det(A_{FR}^{\prime\varphi})$ as follows:
    \begin{small}
        \[
            A_{FR}^{\prime\varphi}(s)=
            (s) \cdot
            \z\langle \partial_z^3 s,s \y\rangle.
        \]
    \end{small}
    Since the isotropy order of $\varphi$ is $2$, we have $\det(A_{FR}^{\prime\varphi})=\z\langle \partial_z^3 s,s \y\rangle \neq 0$.
    Thus for all linearly full totally real flat minimal surfaces of isotropy order $2$ in $\mathbb{H}P^3$, we always have $\det(A_{FR}^{\prime\varphi}) \neq 0$.
    For the general target $\mathbb{H}P^n$, if the isotropy order $\mathrm{r}$ is odd, then it follows from Sec.3 in \cite{U} that $\det(A_{FR}^{\prime\varphi}) \neq 0$.
    Is it true in the case of even isotropy order?
\end{remark}

\end{document}